\tikzstyle{vertex}=[draw,fill=black!8,circle,minimum size=6pt,inner sep=0pt]
\newtheorem{thm}{Theorem}[section]
\newtheorem{cor}[thm]{Corollary}
\theoremstyle{definition}
\newtheorem{con}[thm]{Construction}
\newtheorem{exmp}[thm]{Example}
\newtheorem{rmk}[thm]{Remark}
\def\m{\bm{m}}
\def\bb{\bm{b}}
\def\x{\bm{x}}
\def\y{\bm{y}}
\def\1{\mathbf{1}}
\def\A{\mathbf{A}}
\def\0{\mathbf{0}}
\def\F{\mathcal{F}}
\def\p{\bm{p}}
\def\v1{\bm{v}_1}
\def\vi{\bm{v}_i}
\def\vj{\bm{v}_j}
\def\vvd{\bm{v}_d}
\def\vd1{\bm{v}_{d+1}}
\def\vvr{\bm{v}_r}
\def\vr1{\bm{v}_{r+1}}
\def\ed{\bm{e_d}}
\def\i{\textit}
\def\Q{\mathbb{Q}}
\def\R{\mathbb{R}}
\def\Z{\mathbb{Z}}
\newcommand\hstar{h^\ast}
\newcommand{\conv}{\mathrm{conv}}
\newcommand{\vol}{\mathrm{vol}}
\newcommand{\Ehr}{\operatorname{Ehr}}
\newcommand{\ehr}{\operatorname{ehr}}
\newcommand{\Pyr}{\operatorname{Pyr}}
\newcommand{\homg}{\operatorname{hom}}
\newcommand{\rationalehr}{\operatorname{ehr}_\mathbb{Q}} 
\newcommand{\rationalEhr}[2]{\operatorname{Ehr}_\mathbb{Q}\left(#1;#2\right)} 
\newcommand{\refinedrationalEhr}[2]{\operatorname{Ehr}_\mathbb{Q}^{\sf{ref}}\left(#1;#2\right)}
\newcommand{\rationalhstar}[3]{%
      {h^*_{\mathbb{Q}}(#1;#2;#3)}
}
\newcommand{\refinedrationalhstar}[3]{%
      {h^{*\sf{ref}}_{\mathbb{Q}}(#1;#2;#3)}
}
\let\c@equation\c@thm
\numberwithin{equation}{section}
\title{Boundary $\hstar$-polynomials of rational polytopes}
\author{Esme Bajo}
\address{Department of Mathematics, UC Berkeley}
\email{esme@berkeley.edu}
\author{Matthias Beck}
\address{Department of Mathematics, San Francisco State University}
\email{becksfsu@gmail.com}
\date{20 January 2023}
\thanks{We are indebted to three anonymous referees for numerous helpful comments.}
\begin{document}

\maketitle

\begin{abstract}
If $P$ is a \emph{lattice polytope} (i.e., $P$ is the convex hull of finitely many integer points in $\R^d$) of dimension $d$, Ehrhart's famous theorem (1962) asserts that the integer-point counting function $|nP \cap \Z^d|$ is a degree-$d$ polynomial in the integer variable $n$. Equivalently, the generating function $1 + \sum_{n \ge 1} |nP \cap \Z^d| \, z^n$ is a rational function of the form $\frac{ \hstar(z) }{ (1-z)^{ d+1 } }$; we call $\hstar(z)$ the \emph{$\hstar$-polynomial} of $P$. There are several known necessary conditions for $\hstar$-polynomials, including results by Hibi (1990), Stanley (1991), and Stapledon (2009), who used an interplay of arithmetic (integer-point structure) and topological (local $h$-vectors of triangulations) data of a given polytope. We introduce an alternative \emph{ansatz} to understand Ehrhart theory through the $\hstar$-polynomial of the \emph{boundary} of a polytope, recovering all of the above results and their extensions for rational polytopes in a unifying manner.  We include applications for (rational) Gorenstein polytopes and rational Ehrhart dilations.
\end{abstract}

\setlength{\abovedisplayskip}{3pt}
\setlength{\belowdisplayskip}{3pt}


\section{Introduction}\label{sec:intro}

For a $d$-dimensional \emph{rational polytope} $P \subset \R^{d}$ (i.e., the convex hull of finitely
many points in $\Q^d$) and a positive integer $n$, let $\ehr_P(n)$ denote the number of lattice points in $nP$. 
Ehrhart's famous theorem~\cite{ehrhartpolynomial} says that
\begin{equation}\label{eq:quasidef}
  \ehr_P(n) = \vol(P) \, n^d+k_{d-1}(n) \, n^{d-1}+\cdots+k_1(n)\, n+k_0(n)\, , 
\end{equation}
where $k_0(n),k_1(n),\dots,k_{d-1}(n)$ are periodic functions in $n$.
We call $\ehr_P(n)$ the \emph{Ehrhart quasipolynomial} of $P$. Ehrhart proved that each
period of $k_0(n),k_1(n),\dots,k_{d-1}(n)$ divides the \emph{denominator} of $P$, which is the smallest
integer $q$ such that $q P$ is a \emph{lattice polytope} (i.e., the convex hull of
points in $\Z^d$). 
The \emph{Ehrhart series} is the rational generating function
\[
\Ehr_P(z) := 1 + \sum_{n\geq1} \ehr_P(n)z^n=\frac{\hstar_P(z)}{(1-z^q)^{d+1}} \, ,
\]
where $\hstar_P(z)$ is a polynomial of degree less than $q(d+1)$, the
\emph{$\hstar$-polynomial} of~$P$ (see, e.g., \cite[Section 3.8]{ccd}). 
The best known instance is when $q = 1$, i.e., $P$ is a lattice polytope, in which case $\ehr_P(n)$ is a polynomial. 

Note that the $\hstar$-polynomial depends not only on $q$ (though that is implicitly determined by $P$), but
also on our choice of representing the rational function $\Ehr_P(z)$, which in our form
will not be in lowest terms. For example, the Ehrhart series of the line segment
$P=[- \frac 1 2, \frac 1 2 ]$ is 
\[
\Ehr_P(z)=\frac{1+z+z^2+z^3}{(1-z^2)^2}=\frac{1+z^2}{(1-z)(1-z^2)},
\]
but we typically insist that $h^\ast_P(z)=1+z+z^2+z^3$. Occasionally (e.g., in
Corollary~\ref{cor:rationalpyramids} below) we use different
representations.

Stanley proved that $\hstar_P(z)$ has nonnegative integer coefficients. While
his original proof used commutative algebra~\cite{stanleymagiccohenmac} based on work
of Hochster~\cite{hochster}, and a second proof of Stanley proceeded via shelling
arguments~\cite{stanleydecomp}, there is an ``easy'' conceptual proof using half-open triangulations (see, e.g., \cite[Chapter 5]{crt}).
Stanley's theorem was refined by Stapledon~\cite{stapledondelta} (for lattice polytopes, i.e., $q=1$) and 
Beck--Braun--Vindas-Mel\'{e}ndez~\cite{beckrational} (for rational polytopes), who showed
that the $h^\ast$-polynomial can be decomposed using palindromic polynomials with
nonnegative coefficients. A \i{palindromic polynomial} $f(z)=\sum f_iz^i$ has symmetric
coefficients, that is, $f_i=f_{\text{deg}(f)-i}$ for $i=0,\hdots,\text{deg}(f)$;
equivalently, $z^{\text{deg}(f)}f(\frac 1 z)=f(z)$.
\begin{thm}\label{thm:stapledonrat}
Let $P$ be a full-dimensional rational polytope with denominator $q$ and let $\ell$ be the smallest positive integer such that $\ell P$ contains an interior lattice point. Then
\[
\frac{1+z+\dots+z^{\ell-1}}{1+z+\dots+z^{q-1}}\, h_P^\ast(z)=a(z)+z^\ell \, b(z) \, ,
\]
where $a(z)$ and $b(z)$ are \i{palindromic} polynomials with nonnegative integer coefficients.
\end{thm}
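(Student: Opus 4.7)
The plan is to exploit the \emph{boundary $h^*$-polynomial} $h^*_{\partial P}(z)$ defined via $\Ehr_{\partial P}(z) = h^*_{\partial P}(z)/(1-z^q)^d$. The first step is to combine the disjoint decomposition $\ehr_P(n) = \ehr_{\partial P}(n) + \ehr_{P^\circ}(n)$ with Ehrhart--Macdonald reciprocity $\Ehr_{P^\circ}(z) = (-1)^{d+1}\Ehr_P(1/z)$, which expresses the Ehrhart series of the interior as $\Ehr_{P^\circ}(z) = z^\ell\,\tilde h^*_P(z)/(1-z^q)^{d+1}$, where $\tilde h^*_P(z) := z^{q(d+1)-\ell}\,h^*_P(1/z)$ is the reciprocal polynomial. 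The minimality of $\ell$ pins down $\deg h^*_P = q(d+1)-\ell$, so $\tilde h^*_P$ is correctly normalized, and clearing the common denominator $(1-z^q)^{d+1}$ yields the key identity
\[
h^*_P(z) - z^\ell\, \tilde h^*_P(z) = (1-z^q)\, h^*_{\partial P}(z).
\]

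Next, I would set $F(z):=\frac{1-z^\ell}{1-z^q}\,h^*_P(z)$ and seek palindromic polynomials $a,b$ of degrees $qd$ and $qd-\ell$ respectively with $F = a + z^\ell b$. A direct reciprocation gives $z^{qd}\,F(1/z) = \frac{1-z^\ell}{1-z^q}\,\tilde h^*_P(z)$, and the pair of relations
\[
a(z) + z^\ell b(z) = F(z), \qquad a(z) + b(z) = z^{qd}\,F(1/z)
\]
(the second forced by the requested palindromies) solves uniquely to
\[
a(z) = \frac{h^*_P(z) - z^\ell\,\tilde h^*_P(z)}{1-z^q} = h^*_{\partial P}(z), \qquad b(z) = \frac{\tilde h^*_P(z) - h^*_P(z)}{1-z^q},
\]
where the second equality in the formula for $a$ is the key identity above. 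Palindromy of $a=h^*_{\partial P}$ with respect to degree $qd$ drops out by reciprocating that identity, and palindromy of $b$ with respect to $qd-\ell$ is an analogous formal check.

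With palindromy established, the substance of the theorem reduces to showing that $a$ and $b$ have \emph{nonnegative integer coefficients}. For $a=h^*_{\partial P}$ I would invoke the paper's preceding result on nonnegativity of boundary $h^*$-polynomials for rational polytopes, which settles that half immediately. The main obstacle is then $b$: one must establish polynomiality (equivalently, $(1-z^q)\mid(\tilde h^*_P - h^*_P)$) together with nonnegativity of its integer coefficients. My preferred route is to identify $b$, or $z^\ell b$, combinatorially as a boundary-type $h^*$-polynomial of an auxiliary rational polytopal complex built from $P$, naturally pinned at an interior lattice point of $\ell P$ whose existence is guaranteed by the definition of $\ell$. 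Such an interpretation would secure polynomiality and nonnegativity in a single stroke, simultaneously recovering Stapledon's lattice decomposition and the Beck--Braun--Vindas-Mel\'{e}ndez rational extension.
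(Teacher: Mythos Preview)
Your algebraic setup is correct and coincides with the paper's framework: the identity $h^*_P(z)-z^\ell\,\tilde h^*_P(z)=(1-z^q)\,h^*_{\partial P}(z)$, the identification $a(z)=h^*_{\partial P}(z)$, the uniqueness of the symmetric decomposition, and the palindromicity checks all go through just as you describe. The paper handles these formal parts identically (see the discussion after the statement of Theorem~\ref{thm:stapledonrat} and the computation at the end of Section~\ref{sec:mainproof}), and explicitly remarks that the decomposition into two palindromic pieces is unique and an easy exercise.

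The genuine gap is in nonnegativity, which is the entire content of the theorem. First, there is no ``preceding result on nonnegativity of boundary $h^*$-polynomials for rational polytopes'' to invoke; Theorem~\ref{thm:lowerbound} is stated and proved only for lattice polytopes. More seriously, for $b$ you offer only a hope---to ``identify $b$, or $z^\ell b$, combinatorially as a boundary-type $h^*$-polynomial of an auxiliary rational polytopal complex''---without saying what that complex is or why its $h^*$-polynomial would be $b$. The paper's proof is concrete and establishes nonnegativity of $a$ and $b$ simultaneously: fix $\x\in P^\circ$ with $\ell\x\in\Z^d$, take a disjoint half-open triangulation $T$ of $\partial P$ with exactly one closed simplex (Construction~\ref{con:goodtriang}), and cone each $\Delta\in T$ to $\Pyr(\x,\Delta)$. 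Using the fundamental parallelepiped $\F_{\Pyr(\x,\Delta)}$ generated by $\binom{q\vj}{q}$ and $\binom{\ell\x}{\ell}$, every lattice point $\m$ either has $\beta=0$ (so $\m\in\F_\Delta$, contributing to $h^*_\Delta$ and hence to $a$) or has $\beta>0$, in which case the first $d$ coordinates give a lattice point of $m_{d+1}(\Pyr(\x,\Delta)\setminus\Delta)\subset m_{d+1}P^\circ$, forcing $m_{d+1}\ge\ell$ by minimality of $\ell$. Summing yields $\frac{1-z^\ell}{1-z^q}\,h^*_P(z)=h^*_{\partial P}(z)+z^\ell\sum_{\Delta\in T} b_\Delta(z)$ with each $b_\Delta$ a manifestly nonnegative lattice-point count. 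Your instinct to pin the construction at an interior lattice point of $\ell P$ is exactly right, but what is needed is this pyramid/parallelepiped comparison, not an unspecified auxiliary complex.
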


We remark that our assumption that $P$ is full dimensional and our
convention for $\hstar_P(z)$ imply that $\hstar_P(z)$ is divisible by $1+z+\dots+z^{q-1}$, because the
leading coefficient of $\ehr_P(n)$ is constant (namely, the volume of $P$) and thus $z=1$
is the unique pole with maximal order of the rational function $\Ehr_P(z)$; see, e.g.,
\cite[Theorem~4.1.1]{stanleyec1}.
Thus the decomposition of the polynomial
$\frac{1+z+\dots+z^{\ell-1}}{1+z+\dots+z^{q-1}}\, h_P^\ast(z)$ into two palindromic
polynomials as in Theorem~\ref{thm:stapledonrat} is unique and an easy exercise. The point of
Theorem~\ref{thm:stapledonrat} is that $a(z)$ and $b(z)$ have nonnegative coefficients.

As was pointed out in~\cite{beckrational},
Theorem~\ref{thm:stapledonrat} immediately implies the inequalities
\begin{align*}
      \hstar_0+\cdots+\hstar_{j+1}\geq \hstar_{q(d+1)-1}+\cdots+\hstar_{q(d+1)-1-j} \, , \qquad &j=0,\dots,
\left\lfloor \tfrac{q(d+1)-1}{2} \right\rfloor -1 \, , \\
    \hstar_s+\cdots+\hstar_{s-j}\geq \hstar_0+\cdots+\hstar_j \, , \qquad &j=0,\dots,s \, ,
\end{align*}
where $s:=\deg{\hstar_P(z)}$. For the case $q = 1$ (i.e., lattice polytopes), they go back to Hibi~\cite{hibiehrhartineq,hibilowerbound} and Stanley~\cite{stanleyinequ}.

The case $q = \ell = 1$ of Theorem~\ref{thm:stapledonrat} was proved much earlier by Betke--McMullen~\cite{betkemcmullen}.
Writing a (combinatorially defined) polynomial as a sum of two palindromic polynomials as in
Theorem~\ref{thm:stapledonrat} is often referred to as a \textit{symmetric decomposition}
and has applications beyond a refinement of nonnegativity; we mention one representative
(much more can be found, e.g., in \cite{brandensolus}): $p(z) = a(z) + z \, b(z)$, with
$a(z)$ and $b(z)$ palindromic, is alternatingly increasing (i.e., the coefficients of $p(z)$
satisfy $0 \le p_0 \le p_d \le p_1 \le p_{ d-1 } \le \cdots$) if and only if the
coefficients of both $a(z)$ and
$b(z)$ are nonnegative and unimodal (i.e., the coefficients increase up to some index and
then decrease).

Betke--McMullen's, Stapledon's, as well as Beck--Braun--Vindas-Mel\'{e}ndez's proofs use local
$h$-vectors of a triangulation, their nonnegativity, and the Dehn--Sommerville relations.
Our main goal is to give an ``easy'' conceptual proof of Theorem~\ref{thm:stapledonrat}---in
particular, one that is independent of local $h$-vectors (though they are
hiding under the surface).
Our \emph{ansatz} is to study the $\hstar$-polynomial of the \emph{boundary} of a rational polytope,
and our second goal is to exhibit that such a study is worthwhile, with the hope for further
applications.
The connection to Theorem~\ref{thm:stapledonrat} is that the $\hstar$-polynomial of the boundary of
$P$, defined via
\begin{equation}\label{eq:boundaryhstardef}
\Ehr_{\partial P}(z) := 1 + \sum_{n\geq1} \ehr_{\partial P}(n)z^n=\frac{\hstar_{\partial P}(z)}{(1-z^q)^{d}} \, ,
\end{equation}
equals $a(z)$, for any $q$ and $\ell$;\footnote{
We suspect that this fact is well known to the experts, but we could not find it in the literature. We
thank Katharina Jochemko for pointing it out to us.
}
as we will see below, this equality follows from the uniqueness of the
symmetric decomposition and the palindromicity~\eqref{eq:hstarpalindromic} of $h^\ast_{\partial P}(z)$.
We remark that, a priori, it is not clear that we can always represent the generating
function of $\ehr_{\partial P}(n)$ in the form~\eqref{eq:boundaryhstardef}; in
particular, we will see below (where we will show that this form always exists) that 
$\hstar_{\partial P}(z)$ has degree $qd$, contrary to the degree of
$\hstar_{P}(z)$, and so the quasipolynomial $\ehr_{\partial P}(n)$ does not have constant
term~1.

To illustrate the philosophy behind our approach, here is a do-it-yourself proof setup for Theorem~\ref{thm:stapledonrat} in the case $q = \ell = 1$:
\begin{itemize}
\item fix a (half-open) triangulation $T$ of the boundary $\partial P$ and extend $T$ to a (half-open) triangulation of $P$ by coning over an interior lattice point $\x$;
\item convince yourself that $a(z) = \hstar_{\partial P}(z)$ is palindromic with
nonnegative (in fact, as we will show in Theorem~\ref{thm:lowerbound}, positive) coefficients;
\item realize that the $\hstar$-polynomial of each half-open simplex $\Delta \in T$ is
coefficient-wise less than or equal to the $\hstar$-polynomial of $\conv(\Delta, \x)$, and thus $\hstar_P(z) - a(z)$
has nonnegative coefficients.
\end{itemize}
It turns out that this philosophy works for general $q$ and $\ell$, with slight modifications
(for example, for $\ell > 1$, the interior point $\x$ cannot be a lattice point).
To state our \emph{ansatz} from a different angle, we approach
Theorem~\ref{thm:stapledonrat} by giving (1) a (positive, symmetric) interpretation
of $a(z)$ and (2) an explicit construction which shows that $a(z) \le 
\frac{1+z+\dots+z^{\ell-1}}{1+z+\dots+z^{q-1}}\, h_P^\ast(z)$.

This point of view has consequences beyond a (somewhat short) proof of Theorem~\ref{thm:stapledonrat}.
One of these consequences is an inequality among the two leading coefficients of an Ehrhart polynomial
which seems to be novel.

\begin{cor}\label{cor:twoleadingcoeff}
Let $P$ be a full-dimensional lattice $d$-polytope and let $\ell$ be the smallest positive integer such
that $\ell P$ contains an interior lattice point. Then the two leading coefficients of 
$\ehr_P(n) = k_d \, n^d+k_{d-1} \, n^{d-1}+\cdots+k_0$ satisfy
\[
  \frac{ \ell \, d }{ 2 } \, k_d \ge k_{d-1} \, .
\]
\end{cor}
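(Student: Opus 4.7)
The plan is to apply Theorem~\ref{thm:stapledonrat} with $q=1$ (since $P$ is a lattice polytope) and then to extract the desired inequality by evaluating the symmetric decomposition at $z=1$. Specifically, Theorem~\ref{thm:stapledonrat} gives
\[
(1 + z + \cdots + z^{\ell-1})\, h^*_P(z) = a(z) + z^\ell b(z),
\]
where, as observed in the introduction, $a(z) = h^*_{\partial P}(z)$, and $a(z)$ and $b(z)$ are palindromic with nonnegative integer coefficients. Evaluating at $z=1$ yields the key identity
\[
\ell \, h^*_P(1) \;=\; h^*_{\partial P}(1) + b(1),
\]
and the nonnegativity of the coefficients of $b$ gives $b(1) \ge 0$, hence $\ell \, h^*_P(1) \ge h^*_{\partial P}(1)$.

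Next I would translate both $h^*_P(1)$ and $h^*_{\partial P}(1)$ into statements about the leading coefficients of $\ehr_P(n)$. The standard identity $h^*_P(1) = d! \, k_d$ (which follows from comparing leading coefficients on both sides of $\Ehr_P(z) = h^*_P(z)/(1-z)^{d+1}$) is immediate. For the boundary, I would use Ehrhart--Macdonald reciprocity in the form
\[
\ehr_{\partial P}(n) = \ehr_P(n) - (-1)^d \ehr_P(-n) = 2k_{d-1} n^{d-1} + O(n^{d-2}),
\]
since the $n^d$ terms cancel regardless of the parity of $d$. Comparing leading coefficients in $\Ehr_{\partial P}(z) = h^*_{\partial P}(z)/(1-z)^d$ then yields $h^*_{\partial P}(1) = 2(d-1)! \, k_{d-1}$.

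Substituting these two evaluations into $\ell \, h^*_P(1) \ge h^*_{\partial P}(1)$ gives $\ell \, d! \, k_d \ge 2(d-1)! \, k_{d-1}$, which rearranges to the claimed bound $\frac{\ell d}{2}\, k_d \ge k_{d-1}$. The main (only) conceptual step is the identification $a(z) = h^*_{\partial P}(z)$, which the paper has already flagged and which follows from the uniqueness of the symmetric decomposition together with the palindromicity of $h^*_{\partial P}(z)$; the rest of the proof is essentially a one-line evaluation at $z=1$, and I do not anticipate any serious obstacle.
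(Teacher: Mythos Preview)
Your proposal is correct and follows essentially the same route as the paper's proof: specialize Theorem~\ref{thm:stapledonrat} (with $q=1$ and the identification $a(z)=h^\ast_{\partial P}(z)$) at $z=1$, then convert $h^\ast_P(1)$ and $h^\ast_{\partial P}(1)$ into $d!\,k_d$ and $2(d-1)!\,k_{d-1}$. The only cosmetic difference is that the paper obtains $h^\ast_{\partial P}(1)=2(d-1)!\,k_{d-1}$ by citing the surface-area interpretation of the second Ehrhart coefficient, whereas you extract it directly from $\ehr_{\partial P}(n)=\ehr_P(n)-(-1)^d\ehr_P(-n)$; both arguments are standard and equivalent.
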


Betke--McMullen~\cite[Theorem~6]{betkemcmullen} gave upper bounds for each $k_j$ in terms of $k_d$ and
Stirling numbers of the first kind. For $j = d-1$ they yield $\binom{d+1} 2 \, k_d \ge k_{ d-1 }$, which
Corollary~\ref{cor:twoleadingcoeff} improves upon.\footnote{
We thank Martin Henk for reminding us about the Betke--McMullen inequalities and asking
whether they could be improved using our setup.
}

The structure of the paper is as follows.
Section~\ref{sec:boundary} serves as a point of departure for our study of $\hstar_{\partial P}(z)$.
We prove several inequalities for the coefficients of $\hstar_{\partial P}(z)$, among them a
lower bound result (Theorem~\ref{thm:lowerbound}), which in particular shows that $\hstar_{\partial P}(z)$ has \emph{positive} coefficients.

Section~\ref{sec:triangulations} gives a construction for certain half-open triangulations we will
need for our proof of Theorem~\ref{thm:stapledonrat} in Section~\ref{sec:mainproof}; this section also
contains a proof of Corollary~\ref{cor:twoleadingcoeff}.

In Section~\ref{sec:gorenstein} we discuss reflexive and Gorenstein polytopes, as well as their
rational analogues, and implications for these polytopes from the viewpoint of $\hstar_{\partial
P}(z)$. Our main result in this section (Theorem~\ref{thm:rationalgorenteinboundary}) says that if 
$P$ is a rational polytope with denominator $q$ such that $gP$ is reflexive, then
\[
\hstar_P(z)=\frac{1+z+\dots+z^{q-1}}{1+z+\dots+z^{g-1}} \, \hstar_{\partial P}(z) \, .
\]

In Section~\ref{sec:ratehrhart} we prove an analogue of Theorem~\ref{thm:stapledonrat} for rational
(or, equivalently, real) Ehrhart dilations.


\section{The $\hstar$-polynomial of the Boundary of a Polytope}\label{sec:boundary}

We start by addressing some of the subtleties in defining the Ehrhart series (and thus the $\hstar$-polynomial) of $\partial P$, especially when we ultimately compute the Ehrhart series using a half-open triangulation of the boundary. 
The reason for the convention \eqref{eq:boundaryhstardef}
that $\Ehr_{\partial P}(z)$ (and therefore also $\hstar_{\partial
P}(z)$) has constant term 1 is \emph{Ehrhart--Macdonald reciprocity} (see, e.g.,~\cite[Corollary~5.4.5]{crt}): it says that the rational generating functions $\Ehr_{P}(z)$ and
\[
\Ehr_{P^\circ}(z) := \sum_{n\geq1} \ehr_{P^\circ}(n)z^n=\frac{\hstar_{P^\circ}(z)}{(1-z^q)^{d+1}}
\]
are related via
\[
\Ehr_{P^\circ}(\tfrac 1 z)=(-1)^{d+1}\Ehr_P(z)
\]
or, equivalently,
\[
  z^{q(d+1)} \, \hstar_{P^\circ}(\tfrac 1 z)= \hstar_P(z) \, .
\]
As
$
\Ehr_{\partial P}(z)= \Ehr_P(z)-\Ehr_{P^\circ}(z),
$
we obtain
\[
\hstar_{\partial P}(z)=\frac{\hstar_P(z)-\hstar_{P^\circ}(z)}{1-z^q}
\]
and thus $h^\ast_{\partial P}(z)$ is palindromic, i.e.,
\begin{equation}\label{eq:hstarpalindromic}
  z^{qd} \, \hstar_{\partial P}(\tfrac 1 z) = \hstar_{\partial P}(z) \, .
\end{equation}
(See~\cite[Proposition~5.6.3]{crt} for connections to more general self-reciprocal complexes and the
Dehn--Sommerville relations.)

So $\hstar$-polynomials of boundaries of polytopes are in a sense more restricted than
$\hstar$-polynomials of polytopes. Even further, as we will show in Theorem~\ref{thm:lowerbound} below,
$\hstar_{\partial P}(z)$ has no internal zero coefficients, which is far
from true for $\hstar_{P}(z)$ (see, e.g., \cite{higashitanicounterex}).

On the other hand, there is the following alternative extension of nonnegativity of $\hstar_{P}(z)$ due to Stanley~\cite{stanleymonotonicity} (we state only the version for lattice polytopes):

\begin{thm}
Let $P$ and $Q$ be lattice polytopes with $P\subseteq Q$. Then $h_{P}^\ast(z) \leq h_{Q}^\ast(z)$
coefficient-wise.
\end{thm}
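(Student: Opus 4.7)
The plan is to exhibit $\hstar_Q(z) - \hstar_P(z)$ as a manifestly nonnegative sum, in the spirit of the do-it-yourself proof described in the introduction. The idea is to build a single lattice triangulation of $Q$ that simultaneously triangulates $P$, convert it into a compatible half-open decomposition, and then invoke additivity of the Ehrhart series.

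First I would construct a lattice triangulation $T$ of $Q$, using only the lattice points of $Q$, such that the subcollection $T_P := \{\Delta \in T : \Delta \subseteq P\}$ is itself a triangulation of $P$; equivalently, $\partial P$ is a union of faces of $T$. Such a $T$ exists by a standard pulling argument: first triangulate $P$ by pulling at its lattice points, then extend the induced triangulation of $\partial P$ to the complement $Q \setminus P^\circ$ by pulling at the remaining lattice points of $Q$. A regular triangulation coming from a height function that vanishes on $P$ and is strictly positive outside achieves the same end.

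Next, pick a generic point $\x$ in the interior of $P$ and pass to the associated half-open decomposition $Q = \bigsqcup_{\Delta \in T} \tilde{\Delta}$, where each $\tilde{\Delta}$ is $\Delta$ with its internal facets retained exactly on the $\x$-side, and with all facets on $\partial Q$ retained. Because $\x \in P^\circ$, any facet $F$ shared between a $T_P$-simplex and a $(T \setminus T_P)$-simplex lies in $\partial P$ with $\x$ on the $T_P$-side, and hence is retained by the $T_P$-simplex. Consequently $\{\tilde{\Delta} : \Delta \in T_P\}$ is a disjoint half-open decomposition of $P$.

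Additivity of the Ehrhart series under disjoint unions then yields
\[
\hstar_Q(z) \;=\; \sum_{\Delta \in T} \hstar_{\tilde{\Delta}}(z) \qquad\text{and}\qquad \hstar_P(z) \;=\; \sum_{\Delta \in T_P} \hstar_{\tilde{\Delta}}(z).
\]
Each $\hstar_{\tilde{\Delta}}(z)$ has nonnegative integer coefficients, since it enumerates lattice points in the fundamental half-open parallelepiped of the cone over $\tilde{\Delta}$, so
\[
\hstar_Q(z) - \hstar_P(z) \;=\; \sum_{\Delta \in T \,\setminus\, T_P} \hstar_{\tilde{\Delta}}(z)
\]
is coefficient-wise nonnegative. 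The main obstacle is the first step, the existence of a lattice triangulation of $Q$ containing $P$ as a subcomplex; once that is secured, the remaining arguments are a direct application of the half-open nonnegativity technology that underlies the rest of the paper.
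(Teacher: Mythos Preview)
The paper does not prove this theorem at all; it simply quotes Stanley's monotonicity result with a citation to~\cite{stanleymonotonicity} and then uses it as a foil, immediately giving an example to show that the analogous monotonicity statement \emph{fails} for boundary $\hstar$-polynomials. So there is no ``paper's own proof'' to compare against.

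Your proposal, for what it is worth, is a correct outline of the standard half-open triangulation proof of Stanley monotonicity (in the equal-dimensional case), very much in the spirit of the paper's philosophy. The one step you rightly single out---the existence of a lattice triangulation of $Q$ that restricts to a triangulation of $P$---is indeed where all the content lies, and your pulling/placing sketch is the usual way to secure it. One point worth making explicit in your verification that the half-open pieces of $T_P$ agree in the two decompositions: when $F$ is a facet of some $\Delta\in T_P$ lying on $\partial P$, the hyperplane $\mathrm{aff}(F)$ is actually a \emph{facet-defining} hyperplane of $P$ (because $F$ is $(d-1)$-dimensional and $P$ is convex), so $\x\in P^\circ$ genuinely lies on the $\Delta$-side; this is what makes your ``$\x$ on the $T_P$-side'' claim go through.
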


Monotonicity does not hold for $\hstar_{\partial P}(z)$, as the following example exhibits.

\begin{exmp}
Let $P=\conv\{(0,0),(0,2),(2,0),(2,2)\}$
and $Q=\conv\{(0,0),(0,2),(2,0),(3,3)\}$.
Then
\[
\hstar_{\partial P}(z) 
=1+6z+z^2
\qquad \text{ and } \qquad
\hstar_{\partial Q}(z) 
=1+4z+z^2.
\]
Thus $P\subseteq Q$ but $\hstar_{\partial P}(z)\geq \hstar_{\partial Q}(z)$ coefficient-wise.
\end{exmp}

Moreover, even though $\partial P\subseteq P$, it is not always true that $\hstar_{\partial P}(z)\leq
\hstar_P(z)$: while $\hstar_{\partial P}(z)$ always has degree $qd$, there are many polytopes for
which $\hstar_P(z)$ has lower degree. On the other hand, the $\ell=q=1$ case of Theorem~\ref{thm:stapledonrat} implies 
\[
\hstar_P(z)=\hstar_{\partial P}(z)+z\, b(z) \, ,
\] 
thus when $P$ is a lattice polytope containing an interior lattice point, it is true that
$\hstar_{\partial P}(z)\leq \hstar_P(z)$. The rational version gives us a more general
necessary condition in terms of $\ell$ and $q$ for $\hstar_{\partial P}(z)\leq
\hstar_P(z)$ to hold (again, assuming Theorem~\ref{thm:stapledonrat}):

\begin{cor}
Let $P\subset \R^d$ be a rational polytope with denominator $q$, and let $\ell\geq1$ be
the smallest dilate of $P$ that contains an interior lattice point. If $\ell\leq q$, then
$\hstar_{\partial P}(z)\leq \hstar_P(z)$. (In particular, when $P$ is a lattice polytope, $\ell=1$ suffices.)
\end{cor}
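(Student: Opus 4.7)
My plan is to derive the inequality directly from Theorem~\ref{thm:stapledonrat}, combined with the identification $a(z) = \hstar_{\partial P}(z)$ noted in the introduction. Writing $[k]_z := 1 + z + \cdots + z^{k-1}$ for brevity, the theorem asserts
\[
\frac{[\ell]_z}{[q]_z}\, \hstar_P(z) = \hstar_{\partial P}(z) + z^\ell b(z),
\]
with $b(z)$ having nonnegative integer coefficients. Clearing denominators gives $[\ell]_z \hstar_P = [q]_z \hstar_{\partial P} + z^\ell [q]_z b$. Subtracting $[\ell]_z \hstar_{\partial P}$ from both sides and using $\ell \leq q$ via the identity $[q]_z - [\ell]_z = z^\ell[q-\ell]_z$, we arrive at
\[
[\ell]_z \bigl(\hstar_P(z) - \hstar_{\partial P}(z)\bigr) = z^\ell\Bigl([q-\ell]_z\, \hstar_{\partial P}(z) + [q]_z\, b(z)\Bigr).
\]

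The polynomial in brackets on the right manifestly has nonnegative coefficients, being a sum of products of nonnegative polynomials. A short check at non-$1$ $\ell$th roots of unity $\zeta$, using $[\ell]_\zeta = 0$ together with the defining relation $[\ell]_z \tilde h(z) = \hstar_{\partial P}(z) + z^\ell b(z)$ for $\tilde h := \hstar_P/[q]_z$, shows that the bracket vanishes at every such $\zeta$. Hence $[\ell]_z$ divides it and the quotient $E(z)$ is a polynomial with integer coefficients, yielding the clean identity $\hstar_P(z) - \hstar_{\partial P}(z) = z^\ell E(z)$.

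The main obstacle is to show that $E(z)$ has nonnegative coefficients, since division by $[\ell]_z$ does not preserve nonnegativity in general and an algebraic argument is delicate. The case $\ell \mid q$ is transparent: then $[q]_z/[\ell]_z = 1 + z^\ell + z^{2\ell} + \cdots + z^{q-\ell}$ is itself a nonnegative polynomial, and one can directly expand
\[
\hstar_P(z) - \hstar_{\partial P}(z) = (z^\ell + z^{2\ell} + \cdots + z^{q-\ell})\,\hstar_{\partial P}(z) + (1 + z^\ell + \cdots + z^{q-\ell})\, z^\ell b(z),
\]
which is visibly nonnegative. For general $\ell \leq q$ the cleanest route is to abandon the algebraic approach in favor of a triangulation-based argument in the spirit of the paper: triangulate $\partial P$ and extend to a half-open triangulation of $P$ by coning over an interior point of $\ell P$, then compare the $\hstar$-contribution of each half-open boundary simplex with that of its cone, showing the surplus absorbs the $z^\ell b(z)$ piece.
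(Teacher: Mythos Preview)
You correctly push the algebra to
\[
[\ell]_z\bigl(\hstar_P(z)-\hstar_{\partial P}(z)\bigr)=z^\ell\bigl([q-\ell]_z\,\hstar_{\partial P}(z)+[q]_z\,b(z)\bigr)
\]
and rightly flag that division by $[\ell]_z$ need not preserve nonnegativity; you then treat only the case $\ell\mid q$ and defer the remaining range to a sketch. That gap is real and in fact cannot be closed: the corollary as stated is false when $\ell\le q$ but $\ell\nmid q$. Take $P=[\tfrac13,\tfrac23]\subset\R$, so $d=1$, $q=3$, and $\ell=2$ (the interior of $2P=[\tfrac23,\tfrac43]$ contains the lattice point~$1$). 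A direct computation gives $\hstar_P(z)=1+z^2+z^4$ and $\hstar_{\partial P}(z)=1+z^3$, so the coefficient of $z^3$ in $\hstar_P(z)-\hstar_{\partial P}(z)$ is $-1$. In your notation $E(z)=1-z+z^2$: dividing the nonnegative bracket $1+z^3$ by $[\ell]_z=1+z$ genuinely produces a negative coefficient.

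The paper's own argument is the chain
\[
\hstar_{\partial P}(z)\ \le\ [\ell]_z\,\frac{\hstar_P(z)}{[q]_z}\ \le\ [q]_z\,\frac{\hstar_P(z)}{[q]_z}=\hstar_P(z),
\]
whose second inequality tacitly assumes that $\hstar_P(z)/[q]_z$ has nonnegative coefficients. In the example above this quotient is $1-z+z^2$, so that step fails for exactly the same reason yours does. Your treatment of $\ell\mid q$ is correct and already covers the lattice case $q=1$ emphasized in the parenthetical; the triangulation idea you propose for the remaining range cannot succeed, since it would establish a false assertion.
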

\begin{proof}
By Theorem~\ref{thm:stapledonrat} (with the interpretation $a(z) = \hstar_{\partial P}(z)$) 
and the assumption $\ell\leq q$,
\[
\hstar_{\partial P}(z) \le
(1+z+\dots+z^{\ell-1})\, \frac{\hstar_P(z)}{1+z+\dots+z^{q-1}} \le
(1+z+\dots+z^{q-1})\, \frac{\hstar_P(z)}{1+z+\dots+z^{q-1}}
\]
coefficient-wise. 
\end{proof}

The following lower-bound result is a 
restatement of a theorem of Stapledon~\cite[Theorem~2.14]{stapledondelta} (and a simplified proof) in our language:

\begin{thm}\label{thm:lowerbound}
If $P$ is a lattice $d$-polytope with boundary $\hstar$-polynomial $\hstar_{\partial P}(z)=\hstar_{\partial P,d}z^d+\dots+\hstar_{\partial P,1}z+\hstar_{\partial P,0}$ then
\[
  1 = \hstar_{\partial P,0} \le \hstar_{\partial P,1}\le \hstar_{\partial P,j}
  \qquad \text{ for } j=2,\dots,d-1. 
\]
In particular, $\hstar_{\partial P}(z)$ has positive coefficients.
\end{thm}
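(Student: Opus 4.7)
The plan is to split Theorem~\ref{thm:lowerbound} into the easy opening inequality $h^*_{\partial P,0}=1\le h^*_{\partial P,1}$ and the substantive inequalities $h^*_{\partial P,1}\le h^*_{\partial P,j}$ for $2\le j\le d-1$. For the opener, I would first read off $h^*_{\partial P,0}=1$ directly from the normalization in \eqref{eq:boundaryhstardef}. Then, comparing coefficients of $z^1$ on both sides of $\Ehr_{\partial P}(z)\,(1-z)^d=h^*_{\partial P}(z)$, I would conclude
\[
h^*_{\partial P,1}=|\partial P\cap\Z^d|-d \, ,
\]
which is $\ge 1$ because a full-dimensional lattice $d$-polytope has at least $d+1$ vertices, all lying on $\partial P$. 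Via the palindromicity \eqref{eq:hstarpalindromic} this also yields $h^*_{\partial P,d-1}=h^*_{\partial P,1}\ge1$, and the ``in particular'' statement on positivity will follow from (a), (b), (c) together with \eqref{eq:hstarpalindromic}.

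For the harder inequalities, my approach is to fix a lattice triangulation $T$ of $\partial P$ whose vertex set equals $\partial P \cap \Z^d$, and then convert $T$ via a line-shelling into a half-open triangulation $\widetilde{T}$, yielding
\[
h^*_{\partial P}(z)=\sum_{\Delta\in\widetilde{T}}h^*_\Delta(z) \, ,
\]
where each $h^*_\Delta(z)$ encodes lattice-point counts in a half-open fundamental parallelepiped of the cone over $\Delta$. By the palindromicity \eqref{eq:hstarpalindromic} it suffices to treat $2\le j\le\lfloor d/2\rfloor$. My plan is to construct, locally on each $\Delta$, an injection from lattice points contributing to the $z^1$ coefficient into those contributing to the $z^j$ coefficient, using translation by a carefully chosen height-$(j-1)$ lattice vector --- for instance, the sum of $j-1$ appropriately placed triangulation vertices. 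Summing over $\Delta \in \widetilde{T}$ would then give the global inequality.

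The main obstacle will be that the naive local injection can leave the fundamental parallelepiped of a given $\Delta$, so I expect to need a global reshuffling argument that sums correctly even when individual simplex contributions do not dominate termwise. Stapledon's original proof navigates this via local $h$-vectors and the Dehn--Sommerville relations; the simplification targeted here should exploit the fact that $h^*_{\partial P}(z)$ is already palindromic of degree $d$, so no interior-lattice-point decomposition of $h^*_P(z)$ is needed. I suspect the cleanest way to package the reshuffling is to combine Stanley's monotonicity with a coning construction (as hinted at in the introduction's informal do-it-yourself proof setup), enabling a direct comparison between the $j$th and $1$st coefficients of $h^*_{\partial P}(z)$.
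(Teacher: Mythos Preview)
Your handling of the opening inequality is fine and matches the paper: $h^*_{\partial P,0}=1$ by normalization and $h^*_{\partial P,1}=|\partial P\cap\Z^d|-d\ge 1$.

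The substantive part, however, is not a proof but a plan with an explicitly acknowledged hole. You propose to build, simplex by simplex in a half-open triangulation, an injection from height-$1$ lattice points to height-$j$ lattice points by translating by a height-$(j-1)$ lattice vector. You then concede that such translates can leave the fundamental parallelepiped of $\Delta$, and you gesture at ``global reshuffling'' or ``Stanley's monotonicity with a coning construction'' as a fix. Neither of these is made precise: Stanley monotonicity compares $h^*$-polynomials of nested polytopes coefficientwise, not different coefficients of a single $h^*$-polynomial, and no coning you describe produces an inequality of the form $h^*_{\partial P,1}\le h^*_{\partial P,j}$. As written, the key inequalities for $2\le j\le d-1$ remain unproved.

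The paper's argument avoids the injection problem entirely by routing through the \emph{$h$-vector} of a triangulation rather than the $h^*$-polynomial. Take a triangulation $T$ of $\partial P$ using every lattice point of $\partial P$ as a vertex, with $h$-vector $(h_0,\dots,h_d)$. Then three facts chain together:
\[
h^*_{\partial P,1} \;=\; f_0-d \;=\; h_1 \;\le\; h_j \;\le\; h^*_{\partial P,j}\,,
\]
where the first equality is the computation you already did, the inequality $h_1\le h_j$ for $2\le j\le d-1$ is Barnette's lower bound theorem for simplicial spheres, and $h_j\le h^*_{\partial P,j}$ is Betke--McMullen's inequality comparing the $h$-vector of a lattice triangulation to the $h^*$-polynomial. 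The point is that Barnette gives you the cross-coefficient inequality for free at the level of the $h$-vector, and Betke--McMullen transfers it to $h^*$; no delicate injection or reshuffling is needed. Your proposal never invokes either result, which is why it stalls.
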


This result parallels a lower-bound theorem of
Hibi~\cite{hibilowerbound}, who proved that if $P$ is a $d$-dimensional lattice polytope with
$\hstar$-polynomial $h_P^\ast(z)=\hstar_{P,d}z^d+\dots+\hstar_{P,1}z+\hstar_{P,0}$ of degree $d$
(i.e., $P$ contains an interior lattice point), then $1 = h_{P,0}^\ast\leq h_{P,1}^\ast\leq \hstar_{P,j}$ for $j=2,\dots,d-1$.
Our proof mirrors that of Hibi; it turns out that adapting it for $h_{\partial P}^\ast(z)$ simplifies the proof.

\begin{proof}[Proof of Theorem~\ref{thm:lowerbound}]
Let $T$ be a triangulation of $\partial P$ that uses every lattice point in $\partial P$
(i.e., every lattice point in $\partial P$ is a vertex of a simplex in~$T$),
with $h$-vector $(h_0,\dots,h_d)$ defined, as usual, via
\[
  h_d \, x^d + h_{ d-1 } \, x^{ d-1 } + \dots + h_0 = \sum_{ k = -1 }^{ d-1 } f_k \,
x^{ k+1 } (1-x)^{ d-1-k } 
\]
where $f_k$ denotes the number of $k$-simplices in $T$ and $f_{ -1 } = 1$. 
The definitions of $h$ and $\hstar$ imply
\begin{equation}\label{eq:ineq1}
  h_{1} = f_0-d = \left| \partial P \cap \Z^d \right| - d = \hstar_{\partial P,1} \, .
\end{equation}
(Stanley~\cite{stanleycombcommalg} proved much more about relations between $h$ and
$\hstar$.)
Barnette's famous lower bound theorem~\cite{barnette} (see also \cite[Theorem 13.1]{hibi}) says
\begin{equation}\label{eq:ineq2}
  h_1 \le h_j
  \qquad \text{ for } j = 2, \dots, d-1.
\end{equation}
Finally, we apply a theorem of Betke and McMullen~\cite[Theorem~2]{betkemcmullen} to our
situation: it yields
\begin{equation}\label{eq:ineq3}
   h_{ j} \le \hstar_{\partial P, j}
  \qquad \text{ for } j = 0, \dots, d.
\end{equation}
(Betke--McMullen \cite{betkemcmullen} prove much more, giving a formula for $\hstar$ in terms
of local $h$-vectors.)
The inequalities \eqref{eq:ineq1}, \eqref{eq:ineq2}, and \eqref{eq:ineq3} line up to
complete our proof.
\end{proof}

We finish this section by recalling, for the record, another set of inequalities for
$\hstar_{ \partial P } (z)$ in the special (and important) case that $P$ admits a regular
unimodular boundary triangulation, once more due to Stapledon~\cite[Theorem~2.20]{stapledondelta}:
\[
  1 = \hstar_{\partial P, 0} \le \hstar_{\partial P, 1} \le \dots \le \hstar_{\partial P,
\lfloor \frac d 2 \rfloor } 
  \qquad \text{ and } \qquad
  \hstar_{\partial P, j} \le \binom{ \hstar_{\partial P, 1} + j - 1 }{ j } \, .
\]


\section{Half-open Triangulations}\label{sec:triangulations}

Our approach is to triangulate $\partial P$ into disjoint half-open simplices of dimension $d-1$, in order to
avoid inclusion--exclusion arguments. There is a subtlety stemming from our convention that the $\hstar$-polynomial of $\partial P$ has constant term 1, which we need to address here.

Before introducing the half-open boundary triangulations we will use in the later proofs, we recall the Ehrhart series of a \textit{half-open} simplex. Let $\Delta\subseteq \R^d$ be the simplex with vertices $\v1,\hdots,\vd1\in\frac{1}{q}\Z^d$, where the facets opposite $\v1,\hdots,\vvr$ are missing. That is, let
\[
 \Delta = \left\{ \lambda_1 \v1 + \dots + \lambda_{d+1} \vd1 : 
  \begin{array}{l}
  \lambda_1, \dots, \lambda_{r} > 0 \\
  \lambda_{r+1},\dots,\lambda_{d+1} \ge 0 \\
  \lambda_1 + \dots + \lambda_{d+1} = 1
  \end{array}
  \right\} \, .
\]
Its Ehrhart series is
\[
\Ehr_\Delta(z):=\sum_{n\geq0} \ehr_\Delta(n)z^n,
\] 
with constant term 1 if and only if $\Delta$ is closed, therefore it is possible for its expression as a rational function to be improper. That is, as seen in the construction below, the Ehrhart series can still be expressed in the form
\[
\Ehr_\Delta(z)=\frac{\hstar_\Delta(z)}{(1-z^q)^{d+1}},
\]
but it is possible for the $\hstar$-polynomial to have degree equal to $q(d+1)$. The
following tiling argument of the homogenization of a rational simplex will also be crucial to the proof of Theorem~\ref{thm:stapledonrat} in the next section.

\begin{con}\label{con:tiling}
We follow~\cite[Section~4.6]{crt} and define the \i{homogenization} of $\Delta$ to be the half-open cone
\[
 \homg(\Delta) := \sum_{ j=1 }^{ r } \R_{ >0 } \begin{pmatrix}\vj\\1\end{pmatrix} + \sum_{ j=r+1 }^{ d+1 } \R_{ \ge 0 } \begin{pmatrix}\vj\\1\end{pmatrix} \subset \R^{ d+1 } .
\]
Observe that the intersection of $\homg(\Delta)$ with the hyperplane $x_{d+1}=n$ gives
precisely a copy of the $n$th dilate of $\Delta$, whence
\[
\Ehr_{\Delta}(z)=\sum_{\x\in \homg(\Delta)\cap \Z^{d+1}}z^{x_{d+1}}.
\]
Define the \i{fundamental parallelepiped} of $\Delta$ to be
\[
 \F_\Delta := \sum_{ j=1 }^{ r } (0,1] \begin{pmatrix}q\vj\\q\end{pmatrix} + \sum_{ j=r+1 }^{
d+1 } [0,1) \begin{pmatrix}q\vj\\q\end{pmatrix}.
\]
Because the generators of $\homg(\Delta)$ are linearly independent, $\homg(\Delta)$ can
be tiled with translates of $\F_\Delta$. More precisely, every
point in $\homg(\Delta)$ can be uniquely expressed as the sum of a nonnegative integral
combination of the $\binom{q\vj}{q}$ and an integral point in $\F_\Delta $. This yields
\[
\Ehr_\Delta(z)=\sum_{k_1,\dots,k_{d+1}\in\Z_{\geq0}}\sum_{\m\in\F_\Delta\cap\Z^{d+1}}z^{qk_1+\dots+qk_{d+1}+m_{d+1}}=\frac{\sum_{\m\in
\F_\Delta\cap\Z^{d+1}}z^{m_{d+1}}}{(1-z^q)^{d+1}} \, .
\]
Observe that we chose the fundamental parallelepiped to have generators each with last coordinate $q$, however it is not necessary for this to be the case. For example, suppose $q_1,\dots,q_{d+1}$ are such that $\vj\in \frac{1}{q_j}\Z^{d+1}$. Then we can tile $\homg(\Delta)$ with translates of the fundamental parallelepiped 
\[
\F'_\Delta := \sum_{ j=1 }^{ r } (0,1] \begin{pmatrix}q_j\vj\\q_j\end{pmatrix} + \sum_{ j=r+1 }^{
d+1 } [0,1) \begin{pmatrix}q_j\vj\\q_j\end{pmatrix}
\]
to obtain the expression for the Ehrhart series
\[
\Ehr_\Delta(z)=\sum_{k_1,\dots,k_{d+1}\in\Z_{\geq0}}\sum_{\m\in\F'_\Delta\cap\Z^{d+1}}z^{q_1k_1+\dots+q_{d+1}k_{d+1}+m_{d+1}}=\frac{\sum_{\m\in
\F'_\Delta\cap\Z^{d+1}}z^{m_{d+1}}}{(1-z^{q_1})\cdots(1-z^{q_{d+1}})} \, .
\]
Therefore, the denominator in our resulting expression of the Ehrhart series depends on our choice of fundamental parallelepiped, a subtlety that will be important in the following section.
\end{con}

Naturally, we would like to construct a triangulation of $\partial P$ such that
\begin{equation}\label{eq:halfopengood}
\hstar_{\partial P}(z)=\sum_{\Delta\in T}\hstar_\Delta(z)
\end{equation}
when $T$ is a disjoint half-open triangulation of~$\partial P$. We achieve this as follows:
\begin{itemize}
\item For a $d$-dimensional rational polytope $P$ with denominator $q$, choose a rational triangulation $T$ of $\partial P$ into $(d-1)$-dimensional
simplices with denominators dividing $q$. This can be accomplished, for example, by picking a triangulation that uses only the vertices of $P$.
\item Use the convention to express the Ehrhart series as if each simplex $\Delta\in T$
has denominator $q$, i.e., define
\[
\hstar_\Delta(z)=(1-z^q)^d \Ehr_\Delta(z) \, .
\]
This guarantees that we will already have a common denominator when adding up the Ehrhart series of the simplices in the boundary triangulation.
\item Apply Construction~\ref{con:goodtriang} below to turn $T$ into a disjoint triangulation with exactly one closed simplex. This guarantees that the constant term of $\Ehr_{\partial P}(z)$ will match the constant term of the sum of the Ehrhart series of the simplices in $T$.
\end{itemize}

Once we are able to construct such a disjoint boundary triangulation $T$ with exactly one closed simplex, we will obtain
\begin{align*}
\sum_{\Delta \in T} \frac{\hstar_\Delta(z)}{(1-z^q)^d}=\sum_{n\geq0}\sum_{\Delta\in
T}\ehr_{\Delta}(n)z^n=1+\sum_{n\geq1}\ehr_{\partial P}(n)=\frac{h^\ast_{\partial
P}(z)}{(1-z^q)^d}\, ,
\end{align*}
as desired.

Fortunately, the following construction gives a disjoint boundary triangulation $T$ into half-open
$(d-1)$-simplices, exactly one of which is closed; for such a $T$, we will have~\eqref{eq:halfopengood}.

\begin{con}\label{con:goodtriang}
Fix a rational polytope $P$ and a rational triangulation $T$ of $\partial P$ into
$(d-1)$-dimensional simplices. We will construct, from this given $T$, a \emph{disjoint}
boundary triangulation into half-open simplices.
Choose any point $\x\in P^\circ$ and let $T'$ be the triangulation of $P$ given by coning over $T$, i.e.,
\[
T' := \left\{\Pyr(\x,\Delta):\Delta\in T \right\}
\]
where for
\[
  \Delta = \left\{ \lambda_1 \v1 + \dots + \lambda_d \vvd : 
  \begin{array}{l}
  \lambda_1, \dots, \lambda_d \ge 0 \\
  \lambda_1 + \dots + \lambda_d = 1
  \end{array}
  \right\} 
\]
we define
\[
  \Pyr(\x,\Delta) := \left\{ \lambda_1 \v1 + \dots + \lambda_d \vvd + \lambda_{d+1} \x : 
  \begin{array}{l}
  \lambda_1, \dots, \lambda_{d+1} \ge 0 \\
  \lambda_1 + \dots + \lambda_{d+1} = 1
  \end{array}
  \right\} .
\]
Choose a point $\y\in P$ that is \textit{generic} with respect to $T'$, i.e., $\y$ is not contained in
any facet-defining hyperplane of any simplex in $T'$. For each simplex $\Delta'\in T'$, remove all
facets of $\Delta'$ that are \emph{visible} from $\y$, i.e., remove all facets $F$ of
$\Delta'$ such that $\y$ is not in the halfspace corresponding to $F$ that defines
$\Delta'$ (see, e.g.,~\cite[Chapter~5]{crt} for more details about half-open triangulations using a
visibility construction). This makes $T'$ into a disjoint triangulation of $P$, which restricts to a disjoint
triangulation of $\partial P$. Moreover, the only closed simplex in the disjoint triangulation of
$\partial P$ is the one corresponding to $\Delta'$ that contains $\y$. 
An example is shown in Figure~\ref{fig:visibilitytriang}.
\end{con}

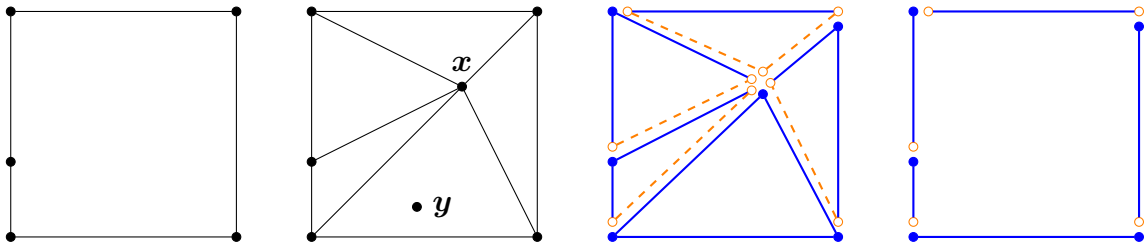
\begin{figure}[htb]
\[
\begin{tikzpicture}
		\node[fill=black, draw=black, shape=circle, scale=0.3] (a) at (0,3) {} ;
		\node[fill=black, draw=black, shape=circle, scale=0.3](ab) at (0,1) {} ;
		\node[fill=black, draw=black, shape=circle, scale=0.3](b) at (0,0) {} ;
		\node[fill=black, draw=black, shape=circle, scale=0.3](c) at (3,3) {} ;
		\node[fill=black, draw=black, shape=circle, scale=0.3](d) at (3,0) {} ;
		\draw (a) -- (b) ;
		\draw (c) -- (d) ;
		\draw (a) -- (c) ;
		\draw (b) -- (d) ;

		\node[fill=black, draw=black, shape=circle, scale=0.3] (a) at (4,3) {} ;
		\node[fill=black, draw=black, shape=circle, scale=0.3](ab) at (4,1) {} ;
		\node[fill=black, draw=black, shape=circle, scale=0.3](b) at (4,0) {} ;
		\node[fill=black, draw=black, shape=circle, scale=0.3](c) at (7,3) {} ;
		\node[fill=black, draw=black, shape=circle, scale=0.3](d) at (7,0) {} ;
		\node[fill=black, draw=black, shape=circle, scale=0.3, label={above:$\x$}](x) at (6,2) {} ;
		\node[fill=black, draw=black, shape=circle, scale=0.3, label={right:$\y$}](q) at (5.4, 0.4) {} ;
		\draw (a) -- (b) ;
		\draw (c) -- (d) ;
		\draw (a) -- (c) ;
		\draw (b) -- (d) ;
		\draw (a) -- (x) ;
		\draw (b) -- (x) ;
		\draw (c) -- (x) ;
		\draw (d) -- (x) ;
		\draw (ab) -- (x) ;

		\node[fill=blue, draw=blue, shape=circle, scale=0.3] (a-closed) at (8,3) {} ;
		\node[fill=none, draw=orange, shape=circle, scale=0.3] (a-open) at (8.2,3) {} ;
		\node[fill=none, draw=orange, shape=circle, scale=0.3](ab-open) at (8,1.2) {} ;
		\node[fill=blue, draw=blue, shape=circle, scale=0.3](ab-closed) at (8,1) {} ;
		\node[fill=blue, draw=blue, shape=circle, scale=0.3](b-closed) at (8,0) {} ;
		\node[fill=none, draw=orange, shape=circle, scale=0.3](b-open) at (8,0.2) {} ;
		\node[fill=none, draw=orange, shape=circle, scale=0.3](c-open) at (11,3) {} ;
		\node[fill=blue, draw=blue, shape=circle, scale=0.3](c-closed) at (11,2.8) {} ;
		\node[fill=none, draw=orange, shape=circle, scale=0.3](d-open) at (11,0.2) {} ;
		\node[fill=blue, draw=blue, shape=circle, scale=0.3](d-closed) at (11,0) {} ;
		\node[fill=blue, draw=blue, shape=circle, scale=0.3](x-b-d) at (10,1.9) {} ;
		\node[fill=none, draw=orange, shape=circle, scale=0.3](x-b-ab) at (9.85,1.95) {} ;
		\node[fill=none, draw=orange, shape=circle, scale=0.3](x-ab-a) at (9.85,2.1) {} ;
		\node[fill=none, draw=orange, shape=circle, scale=0.3](x-a-c) at (10,2.2) {} ;
		\node[fill=none, draw=orange, shape=circle, scale=0.3](x-c-d) at (10.1,2.05) {} ;
		\draw[thick,blue] (b-open) -- (ab-closed) ;
		\draw[thick,blue] (ab-open) -- (a-closed) ;
		\draw[thick, blue] (a-open) -- (c-open) ;
		\draw[thick, blue] (c-closed) -- (d-open) ;
		\draw[thick, blue] (d-closed) -- (b-closed) ;
		\draw[thick, blue] (b-closed) -- (x-b-d) ;
		\draw[thick, blue] (d-closed) -- (x-b-d) ;
		\draw[thick, orange, dashed] (d-open) -- (x-c-d) ;
		\draw[thick, blue] (c-closed) -- (x-c-d) ;
		\draw[thick, orange, dashed] (c-open) -- (x-a-c) ;
		\draw[thick, orange, dashed] (a-open) -- (x-a-c) ;
		\draw[thick, blue] (a-closed) -- (x-ab-a) ;
		\draw[thick, orange, dashed] (ab-open) -- (x-ab-a) ;
		\draw[thick, blue] (ab-closed) -- (x-b-ab) ;
		\draw[thick, orange, dashed] (b-open) -- (x-b-ab) ;

		\node[fill=blue, draw=blue, shape=circle, scale=0.3] (a-closed) at (12,3) {} ;
		\node[fill=none, draw=orange, shape=circle, scale=0.3] (a-open) at (12.2,3) {} ;
		\node[fill=none, draw=orange, shape=circle, scale=0.3](ab-open) at (12,1.2) {} ;
		\node[fill=blue, draw=blue, shape=circle, scale=0.3](ab-closed) at (12,1) {} ;
		\node[fill=blue, draw=blue, shape=circle, scale=0.3](b-closed) at (12,0) {} ;
		\node[fill=none, draw=orange, shape=circle, scale=0.3](b-open) at (12,0.2) {} ;
		\node[fill=none, draw=orange, shape=circle, scale=0.3](c-open) at (15,3) {} ;
		\node[fill=blue, draw=blue, shape=circle, scale=0.3](c-closed) at (15,2.8) {} ;
		\node[fill=none, draw=orange, shape=circle, scale=0.3](d-open) at (15,0.2) {} ;
		\node[fill=blue, draw=blue, shape=circle, scale=0.3](d-closed) at (15,0) {} ;
		\draw[thick,blue] (b-open) -- (ab-closed) ;
		\draw[thick,blue] (ab-open) -- (a-closed) ;
		\draw[thick, blue] (a-open) -- (c-open) ;
		\draw[thick, blue] (c-closed) -- (d-open) ;
		\draw[thick, blue] (d-closed) -- (b-closed) ;
\end{tikzpicture}
\]
\caption{Constructing a half-open boundary triangulation.}\label{fig:visibilitytriang}
\end{figure}

\begin{rmk}\label{rmk:hstarlowerboundedbyopenfacets}
We note that if $T$ is a disjoint lattice triangulation with exactly one closed simplex, then
$\hstar_{\partial P,k}$ is at least the number of simplices in $T$ with $k$ missing faces (since the
fundamental parallelepiped of such a simplex has a lattice point equal to the sum of the $k$
generators opposite those missing faces). In the example in Figure~\ref{fig:visibilitytriang}, if $T$ is a lattice triangulation, we learn that $\hstar_{\partial P}(z)\geq 1+3z+z^2$, coefficient-wise. 
\end{rmk}


\section{A Proof of Theorem~\ref{thm:stapledonrat} via Half-open Pyramids}\label{sec:mainproof}

We will now prove Theorem~\ref{thm:stapledonrat} using a half-open triangulation of the
boundary of the polytope. The geometric interpretation of the coefficients of the
$h^\ast$-polynomial of a half-open simplex in Construction~\ref{con:tiling} allows us to
compare $h^\ast$-polynomials of simplices via their fundamental parallelepipeds, which
will ultimately allow us to compare $h^\ast_P(z)$ and $h^\ast_{\partial P}(z)$. In the
lattice case, we will see the finite geometric series $1+z+\dots+z^{\ell-1}$ arise
naturally as the correction between the denominator of the Ehrhart series of a lattice
polytope and the denominator of the Ehrhart series of a rational polytope with all
integral vertices except for one vertex with denominator $\ell$. In the rational case, we will see the term $\frac{1+z+\dots+z^{\ell-1}}{1+z+\dots+z^{q-1}}$ arise as the correction between the denominator of the Ehrhart series of a rational polytope with denominator $q$ and the denominator of the Ehrhart series of a rational polytope with all denominator-$q$ vertices except for one vertex with denominator~$\ell$.

\begin{proof}[Proof of Theorem~\ref{thm:stapledonrat}]
Let $T$ be a disjoint rational triangulation of $\partial P$ into $(d-1)$-dimensional rational half-open
simplices with denominator $q$. By Construction~\ref{con:goodtriang}, we may assume that exactly one simplex in $T$ is
closed. Let $\x\in P^\circ$ be such that $\ell \x\in (\ell P)^\circ\cap \Z^d$; by the minimality of
$\ell$, we know that $\x$ has denominator $\ell$. 
Given $\Delta \in T$, let $\Pyr(\x,\Delta)$ be the corresponding half-open $d$-simplex.
These simplices give rise to the disjoint rational triangulation $T' := \{\Pyr(\x,\Delta):\Delta\in T\}$ of~$P$. 

We first claim that for any $\Delta \in T$,
\begin{equation}\label{eq:claim}
  \hstar_{\Pyr(\x,\Delta)}(z) = \hstar_{\Delta}(z)+z^\ell \, b_{\Delta}(z)
\end{equation}
for some polynomial $b_{\Delta}(z)$ with nonnegative coefficients.
To see this, let $\v1,\dots,\vvd\in\mathbb{Q}^d$ be the vertices of $\Delta$ (so
$q\v1,\dots,q\vvd\in\Z^d$). Relabel the vertices such that the facets of $\Delta$ opposite $\v1,\dots,\vvr$ are missing and the facets of $\Delta$ opposite $\vr1,\dots,\vvd$ are present in $\Delta$. 

Now we apply Construction~\ref{con:tiling} with the half-open parallelepiped 
\[
  \F_\Delta := \sum_{ j=1 }^{ r } (0,1] \begin{pmatrix}q\vj\\q\end{pmatrix} + \sum_{ j=r+1 }^{
d } [0,1) \begin{pmatrix}q\vj\\q\end{pmatrix}
\]
yielding
\[
\Ehr_\Delta(z)=\frac{\sum_{\m\in \F_\Delta\cap\Z^{d+1}}z^{m_{d+1}}}{(1-z^q)^d} \, .
\]
Meanwhile, $\Pyr(\x,\Delta)$ is a $d$-dimensional simplex with vertices $\v1,\dots,\vvd,\x$, with
missing facets opposite $\v1,\dots,\vvr$ and included facets opposite $\vr1,\dots,\vvd,\x$. Then we again apply Construction~\ref{con:tiling} (noting the choice of fundamental parallelepiped) to obtain a tiling of
\[
  \homg(\Pyr(\x,\Delta)) = \sum_{ j=1 }^{ r } \R_{ >0 } \begin{pmatrix}\vj\\1\end{pmatrix} +
\sum_{ j=r+1 }^{ d } \R_{ \ge 0 } \begin{pmatrix}\vj\\1\end{pmatrix} + \R_{ \ge 0 } \begin{pmatrix}\x\\1\end{pmatrix}
\]
with translates of the half-open parallelepiped 
\[
  \F_{\Pyr(\x,\Delta)} := \sum_{ j=1 }^{ r } (0,1] \begin{pmatrix}q\vj\\q\end{pmatrix} + \sum_{ j=r+1 }^{ d } [0,1) \begin{pmatrix}q\vj\\q\end{pmatrix} + [0,1) \begin{pmatrix} \ell \x \\ \ell \end{pmatrix}
\]
giving
\[
\Ehr_{\Pyr(\x,\Delta)}(z)=\sum_{k_1,\dots,k_{d+1}\in\Z_{\geq0}}\sum_{\m\in\F_{\Pyr(\x,\Delta)}\cap\Z^{d+1} }z^{qk_1+\dots+qk_d+\ell k_{d+1}+m_{d+1}}=\frac{\sum_{\m\in \F_{\Pyr(\x,\Delta)}\cap\Z^{d+1}}z^{m_{d+1}}}{(1-z^q)^d(1-z^\ell)} \, .
\]

Now let $\m$ be a lattice point in $\F_{\Pyr(\x,\Delta)}$, say
\begin{equation}\label{eq:mpoint}
\m=\alpha_1\begin{pmatrix}q\v1\\q\end{pmatrix}+\dots+\alpha_d\begin{pmatrix}q\vvd\\q\end{pmatrix}+\beta\begin{pmatrix}\ell \x\\ \ell\end{pmatrix}
\end{equation}
with $0<\alpha_1,\dots,\alpha_r\leq1$ and $0\leq \alpha_{r+1},\dots,\alpha_d,\beta<1$.
If $\beta=0$, then $\m\in \F_\Delta$.
If $\beta>0$, then the first $d$ coordinates of $\m$ are given by
\[
\alpha_1q\v1+\dots+\alpha_dq\vvd+\beta \ell \x \, ,
\]
and since $\beta>0$, this is a lattice point in $m_{d+1}(\Pyr(\x,\Delta)\setminus \Delta)$, where 
\[
m_{d+1}=\alpha_1q+\dots+\alpha_dq+\beta \ell \, .
\]
But by the minimality of $\ell$, we know that $j(\Pyr(\x,\Delta)\setminus \Delta)$ contains no lattice points for $j=1,\dots,\ell-1$; thus, $m_{d+1}\geq \ell$. 
Therefore, 
\begin{align*}
    \hstar_{\Pyr(\x,\Delta)}(z)&= \sum_{\m\in \F_{\Pyr(\x,\Delta)}\cap\Z^{d+1}}z^{m_{d+1}}=\sum_{\m\in \F_{\Delta}\cap\Z^{d+1}}z^{m_{d+1}} + z^\ell \, b_\Delta(z)\\
    &= \hstar_\Delta(z)+z^\ell \, b_\Delta(z)
\end{align*}
for some polynomial $b_\Delta(z)$ with nonnegative integral coefficients,
and this proves our claim~\eqref{eq:claim}.

Summing over all $\Delta\in T$ now yields
\begin{align*}
    \Ehr_P(z)&=\sum_{\Delta\in T} \Ehr_{\Pyr(\x,\Delta)}(z)=\frac{\sum_{\Delta\in
T} \hstar_{\Pyr(\x,\Delta)}(z)}{(1-z^q)^d(1-z^\ell)}=\frac{\sum_{\Delta\in T}
\left(\hstar_{\Delta}(z)+z^\ell \, b_{\Delta}(z)\right)}{(1-z^q)^d(1-z^\ell)} \, .
\end{align*}
Define $b_P(z):=\sum_{\Delta\in T} b_{\Delta}(z)$, which we know to be a polynomial with nonnegative
integral coefficients. Thus
\begin{align*}
    \Ehr_P(z)&= \frac{\hstar_P(z)}{(1-z^q)^{d+1}} = \frac{\hstar_{\partial P}(z)+z^\ell \, b_P(z)}{(1-z^q)^d(1-z^\ell)}
\end{align*}
and so
\begin{align*}
\frac{1+z+\dots+z^{\ell-1}}{1+z+\dots+z^{q-1}}\, \hstar_P(z)&=\hstar_{\partial P}(z)+z^\ell \, b_P(z)
\, .
\end{align*}
By~\eqref{eq:hstarpalindromic}, $\hstar_{\partial P}(z)$ is palindromic with $z^{qd} \, \hstar_{\partial P}(\frac 1 z)=\hstar_{\partial P}(z)$, and the palindromicity of 
\[
b_P(z)=\frac{\frac{1-z^\ell}{1-z^q} \, \hstar_P(z)-\hstar_{\partial P}(z)}{z^\ell}
\]
follows from this, 
$h_P^\ast(z)=h_{P^\circ}^\ast(z)+(1-z^q)h_{\partial P}^\ast(z)$,
and Ehrhart--Macdonald reciprocity:
\begin{align*}
    z^{qd-\ell}b_P(\tfrac 1 z)&= z^{qd-\ell}\ \frac{\frac{1-\frac 1 {z^\ell}}{1- \frac 1
{z^q}} \, h_P^\ast(\frac 1 z)-\hstar_{\partial P}(\frac 1 z)}{\frac 1 {z^\ell}} \\
    &= z^{qd}\left(\frac{1- \frac 1 {z^\ell}}{1- \frac 1 {z^q}} \Big(h_{P^\circ}^\ast(\tfrac 1 z)+(1- \tfrac 1 {z^q})h_{\partial P}^\ast(\tfrac 1 z)\Big)-h_{\partial P}^\ast(\tfrac 1 z)\right)\\
    &= z^{qd}\left(\frac{1- \frac 1 {z^\ell}}{1- \frac 1 {z^q}}\, h_{P^\circ}^\ast(\tfrac 1 z) +
(1- \tfrac 1 {z^\ell})h_{\partial P}^\ast(\tfrac 1 z)  - h_{\partial P}^\ast(\tfrac 1 z)  \right)\\
    &=\frac{1}{z^\ell}   \left( \frac{z^\ell-1}{z^q-1} \, z^{q(d+1)} \, h_{P^\circ}^\ast(\tfrac 1 z) -
z^{qd} \, h_{\partial P}^\ast(\tfrac 1 z)  \right)\\
    &=\frac{1}{z^\ell}   \left( \frac{1-z^\ell}{1-z^q} \, h_{P}^\ast(z) - h_{\partial P}^\ast(z)  \right)\\
    &= b_P(z) \, . \qedhere
\end{align*}
\end{proof}

\begin{rmk}
If $P \subset \R^{ d-1 }$ is a lattice polytope, it is well known that $\hstar_P(z)$ equals the
$\hstar$-polynomial of $\Pyr(\ed,P) \subset \R^d$. Our above proof implies for more
general pyramids
\[
  \hstar_P(z) \le \hstar_{ \Pyr(\x,P) } (z)
\]
coefficient-wise, for any $\x \in \Z^d$ not in the affine span of~$P$.  Moreover, the proof reveals a rational analogue:
\begin{cor}\label{cor:rationalpyramids}
Let $P\subset \R^{d-1}$ be a rational polytope with denominator $q$. Then 
\[
\hstar_{P}(z)=\hstar_{{\rm Pyr}(\ed,P)}(z) \, ,
\]
where $\hstar_{P}(z)=(1-z^q)^d\Ehr_P(z)$ and
$\hstar_{{\rm Pyr}(\ed,P)}(z)=(1-z^q)^d(1-z)\Ehr_{\Pyr(\ed,P)}(z)$. If $\x\in\frac{1}{r}\Z^d$, then
\[
\hstar_{P}(z)\leq \hstar_{\Pyr(\x,P)}(z) \, ,
\]
where $\hstar_{P}(z)=(1-z^q)^d\Ehr_P(z)$ and
$\hstar_{\Pyr(\x,P)}(z)=(1-z^q)^d(1-z^r)\Ehr_{\Pyr(\x,P)}(z)$.
\end{cor}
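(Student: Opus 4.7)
The plan is to mimic the fundamental-parallelepiped comparison that drove the proof of Theorem~\ref{thm:stapledonrat}, but now applied to a pyramid over the whole polytope rather than cones over boundary simplices. First pick a rational triangulation $T$ of $P$ into $(d-1)$-simplices with denominators dividing $q$ and, by an adaptation of Construction~\ref{con:goodtriang} (applied to $P$ itself rather than $\partial P$), arrange that $T$ is a disjoint half-open triangulation of $P$ with exactly one closed member. Coning each $\Delta\in T$ from $\x$ (and inheriting the visibility-based open/closed facets, with a consistent choice at the new apex) produces a disjoint triangulation $T'=\{\Pyr(\x,\Delta):\Delta\in T\}$ of $\Pyr(\x,P)$, since two pyramids from distinct half-open $\Delta_i$ can only meet at the apex $\x$. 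Hence $\Ehr_P(z)=\sum_{\Delta\in T}\Ehr_\Delta(z)$ and $\Ehr_{\Pyr(\x,P)}(z)=\sum_{\Delta\in T}\Ehr_{\Pyr(\x,\Delta)}(z)$.

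Next I would apply Construction~\ref{con:tiling} twice. For $\Delta\in T$ with vertices $\v1,\dots,\vvd\in\tfrac1q\Z^{d-1}$ (with the facets opposite $\v1,\dots,\vvr$ missing), using denominator $q$ throughout gives the usual $\F_\Delta\subset\R^d$ and $\hstar_\Delta(z)=\sum_{\m\in\F_\Delta\cap\Z^d}z^{m_d}$. For $\Pyr(\x,\Delta)\subset\R^d$, using denominator $q$ on the base vertices (embedded as $(\vj,0)\in\R^d$) and denominator $r$ on the apex $\x$, the construction gives the half-open parallelepiped
\[
\F_{\Pyr(\x,\Delta)}=\sum_{j=1}^{r}(0,1]\begin{pmatrix}q\vj\\0\\q\end{pmatrix}+\sum_{j=r+1}^{d}[0,1)\begin{pmatrix}q\vj\\0\\q\end{pmatrix}+[0,1)\begin{pmatrix}r\x\\r\end{pmatrix}\subset\R^{d+1},
\]
so $\Ehr_{\Pyr(\x,\Delta)}(z)$ has denominator $(1-z^q)^d(1-z^r)$, exactly as in the statement.

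The key observation is that padding any lattice point $\m\in\F_\Delta\cap\Z^d$ with a zero in the coordinate tied to the apex (equivalently, taking $\beta=0$ in the parallelepiped expansion) produces a lattice point of $\F_{\Pyr(\x,\Delta)}\cap\Z^{d+1}$ with the same last coordinate. This yields an injection of weighted generating functions and hence $\hstar_\Delta(z)\le\hstar_{\Pyr(\x,\Delta)}(z)$ coefficient-wise. When $\x=\ed$ we have $r=1$, so the constraint $\beta\in[0,1)\cap\Z=\{0\}$ forces \emph{every} lattice point of $\F_{\Pyr(\ed,\Delta)}$ to arise in this fashion, upgrading the inequality to an equality. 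Summing over $\Delta\in T$ then gives $\hstar_P(z)\le\hstar_{\Pyr(\x,P)}(z)$ in general and $\hstar_P(z)=\hstar_{\Pyr(\ed,P)}(z)$ in the lattice-apex case.

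The main obstacle is the first paragraph: checking that Construction~\ref{con:goodtriang} can be carried out on $P$ (rather than on $\partial P$) and that the coned triangulation $T'$ is simultaneously disjoint, has exactly one closed simplex (so that the constant term of $\Ehr_{\Pyr(\x,P)}(z)$ matches the sum over $T'$), and respects the mixed-denominator convention used above. Once this bookkeeping is in place, the comparison of fundamental parallelepipeds is a direct copy of the $\beta=0$ analysis already carried out in the proof of Theorem~\ref{thm:stapledonrat}.
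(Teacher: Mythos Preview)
Your proposal is correct and is precisely the argument the paper has in mind: the corollary is stated without separate proof, as an immediate byproduct of the fundamental-parallelepiped comparison in the proof of Theorem~\ref{thm:stapledonrat}, and you have filled in those details faithfully. The one phrasing to tighten is the $\x=\ed$ case: it is not ``$r=1$'' alone that forces $\beta\in\Z$, but rather that the $d$-th coordinate of any lattice point $\m\in\F_{\Pyr(\ed,\Delta)}$ equals $\beta$ (the base generators $(q\vj,0,q)$ have zero $d$-th coordinate), whence $\beta\in\Z\cap[0,1)=\{0\}$; and your ``main obstacle'' dissolves once you note that the visibility construction applies to any triangulation of a polytope and that, with the facet opposite $\x$ always included, $\Pyr(\x,\Delta)$ is closed iff $\Delta$ is.
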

\end{rmk}

We finish this section with proving Corollary~\ref{cor:twoleadingcoeff}.

\begin{proof}[Proof of Corollary~\ref{cor:twoleadingcoeff}]
It is well known that $\frac{ \hstar_P(1) }{ d! }$ equals the volume of $P$, which in turn equals $k_d$;
this follows, e.g., by writing $\ehr_P(n)$ in terms of the binomial-coefficient basis
$\binom n d$, $\binom {n+1} d$, \dots, $\binom {n+d} d$ (and then the
coefficients are precisely the coefficients of $\hstar_P(z)$; see, e.g.,
\cite[Section~3.5]{ccd}).
By the same reasoning, $\frac{ \hstar_{\partial P}(1) }{ (d-1)! }$ equals the sum of the volumes of the
facets of $P$, each measures with respect to the sublattice in the affine span of the facet. This sum, in
turn, is well known to equal $2 \, k_{ d-1 }$. Putting it all together, 
\[
  \ell \, d! \, k_d
  = \ell \, \hstar_P(1)
  \ge \hstar_{\partial P}(1)
  = 2 (d-1)! \, k_{ d-1 } \, ,\
\]
where the inequality follows from specializing Theorem~\ref{thm:stapledonrat} (with the interpretation $a(z) = \hstar_{\partial P}(z)$) at $z=1$.
\end{proof}


\section{Rational Reflexive and Gorenstein Polytopes}\label{sec:gorenstein}

A $d$-dimensional polytope $P$ is \textit{reflexive} if it is a lattice polytope that contains the origin in its interior and one of the following (equivalent) statements holds:
\begin{enumerate}
\item the dual polytope of $P$, i.e.,
\[
P^\ast:=\{\x\in\R^d:\x\cdot\y\leq1\text{ for all }\y\in P\} \, ,
\]
 is a lattice polytope;
\item $P=\{\x\in\R^d:\A \x\leq\1\}$, where $\A$ is an integral matrix and $\1$ is a vector of all~1s.
\end{enumerate}

Since $\hstar$-polynomials are invariant under integral translation, we allow translates of reflexive polytopes. Now we need not require that $\0\in P^\circ$, and the following statements are all equivalent to being an integral translate of a reflexive polytope:
\begin{enumerate}
\item $|P^\circ \cap \Z^d| = 1$ and $(t+1)P^\circ\cap\Z^d=tP\cap\Z^d$ for all $t\in\Z_{>0}$;
\item $P$ contains a unique interior lattice point that is lattice distance 1 away from each facet of $P$;
\item $z^d \, \hstar_P(\frac 1 z) = \hstar_P(z) \, .$
\end{enumerate}

If $P$ is a lattice polytope and there exists an integer $g\geq1$ such that $gP$ is reflexive, we say that $P$ is $g$-\textit{Gorenstein}.

By retracing the steps in the proof of Theorem~\ref{thm:stapledonrat} but observing that
there is no $\beta>0$ case if $P$ is reflexive, we can obtain a relationship between
$\hstar_P(z)$ and $\hstar_{\partial P}(z)$. Similarly, we can do this for Gorenstein
polytopes. Both relationships quickly imply that $\hstar_P(z)$ is palindromic since
$\hstar_{\partial P}(z)$ is. In this section we will see these results as
corollaries of more general theorems for \emph{rational} polytopes.

Fiset and Kasprzyk generalized the notion of being reflexive to rational polytopes in \cite{fisetkasprzyk}. They define a polytope $P\subseteq \R^d$ to be \textit{rational reflexive} if it is the convex hull of
finitely many rational points in $\Q^d$, contains the origin in its interior, and has a lattice dual polytope. Equivalently, a rational polytope $P$ is \textit{rational reflexive} if it has a hyperplane description $P=\{\x\in\R^d:\A \x\leq\1\}$, where $\A$ is an integral matrix. 

Fiset and Kasprzyk prove the following:
\begin{thm}
If $P$ is a rational reflexive polytope, then $\hstar_P(z)$ is palindromic.
\end{thm}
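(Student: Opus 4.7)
The plan is to revisit the proof of Theorem~\ref{thm:stapledonrat} with the specific choice $\x = \0$; rational reflexivity will force the ``extra'' polynomial $b_P(z)$ appearing there to vanish, producing a factorization of $\hstar_P(z)$ from which palindromicity follows immediately. Since $P$ is rational reflexive, $\0 \in P^\circ \cap \Z^d$, so we may take $\ell = 1$ and $\x = \0$ in the notation of Theorem~\ref{thm:stapledonrat} (and indeed $\x$ then has denominator $1 = \ell$). Fix a rational triangulation $T$ of $\partial P$ into half-open $(d-1)$-simplices with denominators dividing $q$, chosen so that each simplex lies in a single facet of $P$, and form $T' = \{\Pyr(\0,\Delta) : \Delta \in T\}$ via Construction~\ref{con:goodtriang}.

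The crux is to show $b_\Delta(z) = 0$ for each $\Delta \in T$. A lattice point in the fundamental parallelepiped $\F_{\Pyr(\0,\Delta)}$ has the form
\[
  \m = \alpha_1\begin{pmatrix}q\v1\\q\end{pmatrix} + \cdots + \alpha_d\begin{pmatrix}q\vvd\\q\end{pmatrix} + \beta\begin{pmatrix}\0\\1\end{pmatrix}
\]
with $\alpha_1,\dots,\alpha_r \in (0,1]$ and $\alpha_{r+1},\dots,\alpha_d,\beta \in [0,1)$. By rational reflexivity, the facet of $P$ containing $\Delta$ is cut out by an equation $\bm{a} \cdot \x = 1$ for some $\bm{a} \in \Z^d$, so $\bm{a} \cdot \vj = 1$ for every vertex $\vj$ of $\Delta$. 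Pairing $\bm{a}$ with the first $d$ coordinates of $\m$ yields $q\sum_j \alpha_j$, which must be an integer since $\bm{a}$ is integral and those coordinates form a lattice vector. The last coordinate $q\sum_j \alpha_j + \beta$ is also an integer, forcing $\beta \in \Z \cap [0,1) = \{0\}$. Thus every lattice point of $\F_{\Pyr(\0,\Delta)}$ already lies in $\F_\Delta$, so $b_\Delta(z) = 0$, and summing over $\Delta$ gives $b_P(z) = 0$.

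Substituting $\ell = 1$ and $b_P(z) = 0$ into the identity $\frac{1+z+\cdots+z^{\ell-1}}{1+z+\cdots+z^{q-1}}\hstar_P(z) = \hstar_{\partial P}(z) + z^\ell b_P(z)$ derived in the proof of Theorem~\ref{thm:stapledonrat} yields
\[
  \hstar_P(z) = (1+z+\cdots+z^{q-1})\, \hstar_{\partial P}(z).
\]
Both factors on the right are palindromic---$\hstar_{\partial P}(z)$ by~\eqref{eq:hstarpalindromic} and $1+z+\cdots+z^{q-1}$ by inspection---so their product $\hstar_P(z)$ is palindromic. The only real obstacle is the facet-refinement property of the triangulation, which is standard for any triangulation of $\partial P$ refining the face lattice (for instance, one using only the vertices of $P$).
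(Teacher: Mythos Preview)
Your proof is correct and follows essentially the same route as the paper: the paper deduces Fiset--Kasprzyk's theorem from Theorem~\ref{thm:ratrefl}, whose proof is precisely your argument---cone over $\0$, use the integral facet normal $\bm a$ to show that $\beta$ must be an integer and hence zero, conclude $\F_{\Pyr(\0,\Delta)}\cap\Z^{d+1}=\F_{\Delta}\cap\Z^{d+1}$, and sum to obtain $\hstar_P(z)=(1+z+\cdots+z^{q-1})\hstar_{\partial P}(z)$. Your framing as the $\ell=1$, $\x=\0$ specialization of the proof of Theorem~\ref{thm:stapledonrat} with $b_\Delta(z)=0$ is exactly how the paper motivates the result.
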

In their proof, they define the ``$\hstar$-polynomials'' corresponding to each of the
components of the Ehrhart quasipolynomial of the rational polytope, show (using
reciprocity) that the $i$th such polynomial is equal to the $(q-1-i)$th polynomial with
coefficients read backwards, and finally show that the $\hstar$-polynomial of the rational
polytope can be written in terms of these smaller polynomials. This is a generalized
version of Hibi's proof \cite{hibidual} that lattice polytopes with lattice duals have
symmetric $\hstar$-polynomials. Our proof in the previous section shows the lattice version
in an alternative way, and in this section, we will show the rational version. By
\eqref{eq:hstarpalindromic}, Fiset--Kasprzyk's theorem is an immediate corollary of the following:

\begin{thm}\label{thm:ratrefl}
If $P$ is a rational reflexive polytope with denominator $q$, then 
\[
\hstar_P(z)=\left(1+z+\dots+z^{q-1}\right)\hstar_{\partial P}(z) \, .
\]
\end{thm}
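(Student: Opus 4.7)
The plan is to rerun the proof of Theorem~\ref{thm:stapledonrat} with the coning point $\x=\0$ and show that reflexivity eliminates the correction term. Because $P$ is rational reflexive, $\0 \in P^\circ \cap \Z^d$, so the smallest dilate of $P$ containing an interior lattice point is $\ell=1$. Specializing Theorem~\ref{thm:stapledonrat} (with the interpretation $a(z)=\hstar_{\partial P}(z)$) yields
\[
\frac{\hstar_P(z)}{1+z+\dots+z^{q-1}} \;=\; \hstar_{\partial P}(z)+z\,b_P(z),
\]
so the whole theorem is equivalent to the statement $b_P(z)=0$.

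To establish that, I would fix a rational half-open triangulation $T$ of $\partial P$ into $(d-1)$-simplices with denominators dividing $q$, per Construction~\ref{con:goodtriang}, and then apply the proof of Theorem~\ref{thm:stapledonrat} with $\x=\0$. Any $\Delta\in T$ automatically lies in a single facet $F$ of $P$, because its $(d-1)$-dimensional relative interior is contained in $\partial P$ and distinct facets of $P$ meet in dimension at most $d-2$. For such a $\Delta$, with vertices $\v1,\dots,\vvd$, the homogenized pyramid has fundamental parallelepiped
\[
\F_{\Pyr(\0,\Delta)} \;=\; \sum_{j=1}^{r}(0,1]\binom{q\vj}{q}+\sum_{j=r+1}^{d}[0,1)\binom{q\vj}{q}+[0,1)\binom{\0}{1},
\]
and $b_\Delta(z)$ weights by $z^{m_{d+1}}$ precisely those lattice points $\m\in\F_{\Pyr(\0,\Delta)}\cap\Z^{d+1}$ whose coefficient $\beta$ on $\binom{\0}{1}$ is strictly positive.

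The crux is ruling out such lattice points using reflexivity. Writing $P=\{\x\in\R^d:\A\x\le\1\}$ with $\A$ an integer matrix, let $\bm{a}_F$ be the row of $\A$ defining the facet $F\supseteq\Delta$, so that $\bm{a}_F\cdot\vj=1$ and hence $\bm{a}_F\cdot q\vj=q$ for every vertex $\vj$ of $\Delta$. Given $\m=\sum_{j=1}^d\alpha_j\binom{q\vj}{q}+\beta\binom{\0}{1}\in\Z^{d+1}$, pairing $\bm{a}_F$ with the first $d$ (integer) coordinates of $\m$ yields $q(\alpha_1+\dots+\alpha_d)\in\Z$; combined with $m_{d+1}=q(\alpha_1+\dots+\alpha_d)+\beta\in\Z$, this forces $\beta\in\Z\cap[0,1)=\{0\}$. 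Hence $b_\Delta(z)=0$ for every $\Delta\in T$, so $b_P(z)=\sum_{\Delta}b_\Delta(z)=0$, proving the theorem.

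The main step where reflexivity does real work is the pairing argument above: the integer linear functional $\bm{a}_F$ takes the value $q$ on every generator $\binom{q\vj}{q}$ but the value $0$ on $\binom{\0}{1}$, and it is exactly that asymmetry that forces $\beta=0$. For a general rational polytope no such integer functional exists, which is precisely why an extra $z\,b_P(z)$ can appear in Theorem~\ref{thm:stapledonrat}.
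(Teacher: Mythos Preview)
Your proof is correct and follows essentially the same route as the paper: cone over $\0$, set up the fundamental parallelepiped of $\Pyr(\0,\Delta)$, and use the integer facet normal $\bm{a}_F$ (from the description $P=\{\x:\A\x\le\1\}$ with $\A$ integral) to force $\beta\in\Z\cap[0,1)=\{0\}$, whence $\F_{\Pyr(\0,\Delta)}\cap\Z^{d+1}=\F_\Delta\cap\Z^{d+1}$. The only cosmetic difference is that you phrase the conclusion as ``$b_P(z)=0$ in the decomposition of Theorem~\ref{thm:stapledonrat},'' whereas the paper writes out the resulting Ehrhart-series identity directly; the key pairing argument is identical.
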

\begin{proof}
Fix a disjoint half-open triangulation $T$ of $\partial P$ into $(d-1)$-dimensional simplices
using the vertices of $P$. Fix $\Delta\in T$, say $\Delta=\conv(\v1,\dots,\vvd)$ with
$\v1,\dots,\vvd\in\frac{1}{q}\Z^d$, where the facets of $\Delta$ opposite $\v1,\dots,\vvr$
are missing and the facets opposite $\vr1,\dots,\vvd$ are included. 
We now follow the steps in our proof of Theorem~\ref{thm:stapledonrat} using
the fundamental parallelepiped
\[
\F_\Delta:=\sum_{j=1}^r (0,1]\begin{pmatrix}
q\vj\\q
\end{pmatrix}+\sum_{j=r+1}^d [0,1)\begin{pmatrix}
q\vj\\q
\end{pmatrix}
\]
yielding
\[
\Ehr_{\Delta}(z)=\frac{\sum_{\m\in\F_\Delta\cap\Z^{d+1}}z^{m_{d+1}}}{(1-z^q)^d}
\]
and
\[
\F_{\Pyr(\0,\Delta)}:=\sum_{j=1}^r (0,1]\begin{pmatrix}
q\vj\\q
\end{pmatrix}+\sum_{j=r+1}^d [0,1)\begin{pmatrix}
q\vj\\q
\end{pmatrix}+[0,1)\begin{pmatrix}
\0\\1
\end{pmatrix}
\]
giving
\[
\Ehr_{\Pyr(\0,\Delta)}(z)=\frac{\sum_{\m\in\F_{\Pyr(\0,\Delta)}\cap\Z^{d+1}}z^{m_{d+1}}}{(1-z^q)^d(1-z)} \, .
\]
Fix a point $\p\in\F_{\Pyr(\0,\Delta)}$, say
\[
\p=\sum_{j=1}^d \alpha_j\begin{pmatrix}
q\vj\\q
\end{pmatrix}+\beta\begin{pmatrix}
\0\\1
\end{pmatrix},
\]
with $0<\alpha_1,\dots,\alpha_r\leq 1$ and $0\leq \alpha_{r+1},\dots,\alpha_d,\beta<1$. The simplex $\Delta$ is contained in some facet-defining hyperplane of $P$, say 
\[
H= \left\{\x\in \R^d: a_1x_1+\dots+a_dx_d=1 \right\},
\]
where $a_1,\dots,a_d\in\Z$. Consider the value of $a_1p_1+\dots+a_dp_d$ (where $p_1,\hdots,p_{d+1}$ are the coordinates of $\p$). If $v_{i,j}$ denotes the $j$th coordinate of $\vi$, then
\begin{align*}
a_1p_1+\dots+a_dp_d&=a_1(\alpha_1qv_{1,1}+\dots+\alpha_dqv_{d,1})+\dots+a_d(\alpha_1qv_{d,1}+\dots+\alpha_dqv_{d,d})\\
&= q\alpha_1(a_1v_{1,1}+\dots+a_dv_{1,d})+\dots+q\alpha_d(a_1v_{d,1}+\dots+v_{d,d}) \, .
\end{align*}
Since each of $\v1,\dots,\vvd\in H$, this is equal to
\[
q\alpha_1+\dots+q\alpha_d=p_{d+1}-\beta \, .
\]
If $\p$ is a \textit{lattice} point in
$\F_{\Pyr(\0,\Delta)}$, then both $a_1p_1+\dots+a_dp_d$ and
$p_{d+1}$ are integers, so $\beta$ must also be an integer. This forces $\beta=0$ and thus
\begin{equation}\label{eq:fundparequal}
\F_{\Pyr(\0,\Delta)}\cap\Z^{d+1}=\F_{\Delta}\cap\Z^{d+1}.
\end{equation}
Therefore,
\[
\Ehr_{\Pyr(\0,\Delta)}(z)=\frac{\hstar_{\Delta}(z)}{(1-z^q)^d(1-z)} \, .
\]
Summing over all $\Delta$, we obtain
\[
\Ehr_P(z)=\frac{\hstar_{\partial P}(z)}{(1-z^q)^d(1-z)}
\]
and so $\hstar_{P}(z)=(1+z+\dots+z^{q-1})\hstar_{\partial P}(z)$.
\end{proof}

Observe that a (lattice) reflexive polytope is a rational reflexive polytope that happens
to be a lattice polytope, so letting $q=1$ yields the following result:
\begin{cor}\label{cor:latrelf}
If $P$ is a (lattice) reflexive polytope, then $\hstar_P(z)=\hstar_{\partial P}(z)$.
\end{cor}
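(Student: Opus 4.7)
The plan is to derive this corollary directly from Theorem~\ref{thm:ratrefl} by observing that a lattice reflexive polytope is simply a rational reflexive polytope whose denominator equals~$1$. Concretely, I would first confirm that the two notions agree in the lattice case: if $P$ is a lattice reflexive polytope in the sense defined at the start of Section~\ref{sec:gorenstein}, then by characterization~(1) there, $P^\ast$ is a lattice polytope, and by characterization~(2), $P = \{\x \in \R^d : \A\x \le \1\}$ for an integral matrix $\A$. This is precisely the hyperplane description used by Fiset--Kasprzyk to define rational reflexivity, so $P$ qualifies as a rational reflexive polytope. Since $P$ is already a lattice polytope, its denominator is $q=1$.

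Applying Theorem~\ref{thm:ratrefl} with $q=1$, the factor $1 + z + \dots + z^{q-1}$ collapses to $1$, and we get
\[
\hstar_P(z) = \hstar_{\partial P}(z) \, ,
\]
as claimed.

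There is essentially no obstacle beyond checking this specialization: the real work was done in Theorem~\ref{thm:ratrefl}, where the key geometric observation was that for a rational reflexive polytope, the half-open fundamental parallelepiped $\F_{\Pyr(\0,\Delta)}$ of each boundary simplex coned over the origin contains no lattice points with $\beta > 0$ (because any lattice point forces $a_1p_1 + \dots + a_dp_d$ and $p_{d+1}$ to be integers, forcing $\beta$ integral, hence $\beta = 0$, via identity~\eqref{eq:fundparequal}). For lattice reflexive $P$, this argument applies verbatim with $q=1$, and a $q=1$ version of this corollary could also be stated independently by carrying out the proof of Theorem~\ref{thm:ratrefl} directly in the lattice case, but invoking the rational theorem is the most economical route.
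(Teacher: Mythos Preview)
Your proposal is correct and matches the paper's approach exactly: the paper also derives this corollary simply by noting that a lattice reflexive polytope is a rational reflexive polytope with denominator $q=1$, so Theorem~\ref{thm:ratrefl} specializes to $\hstar_P(z)=\hstar_{\partial P}(z)$.
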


\begin{rmk}
We note that for rational reflexive polytopes (as opposed to lattice reflexive polytopes)
it is not necessarily the case that
$P=\{\x\in\R^d:\A\x\leq\1\}$ implies that the origin is
lattice distance 1 away from each facet-defining hyperplane of $P$. It is possible for a given
facet-defining hyperplane to not contain any lattice points and thus be lattice distance
less than 1 from the origin. However, the equality \eqref{eq:fundparequal} between the sets of lattice points in the two fundamental parallelepipeds still holds.
\end{rmk}

We extend the definition of a rational reflexive to a rational Gorenstein polytope in a natural way. Let $P$ be a rational polytope and let $g\geq1$ be an integer. We say that $P$ is \textit{rational $g$-Gorenstein} if $gP$ is an integral translate of a (lattice) reflexive polytope. We allow $gP$ to be a translate of a reflexive polytope so that we do not force $P$ to have an interior point. Also note that, if we have a rational $g$-Gorenstein polytope with denominator $q$, we must have~$q|g$.

\begin{thm}\label{thm:rationalgorenteinboundary}
If $P$ is a rational $g$-Gorenstein polytope with denominator $q$, then
\[
\hstar_P(z)=\frac{1+z+\dots+z^{q-1}}{1+z+\dots+z^{g-1}} \, \hstar_{\partial P}(z) \, .
\]
\end{thm}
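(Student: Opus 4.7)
The plan is to mimic the proof of Theorem~\ref{thm:ratrefl} by coning a half-open triangulation of $\partial P$ from a canonical interior point of $P$ whose denominator reflects the Gorenstein index~$g$. Since $\hstar$-polynomials are invariant under integer translations, I would first translate so that $gP$ is literally reflexive, with unique interior lattice point $t \in \Z^d$; then set $\x := t/g \in P^\circ$, so $g\x = t \in \Z^d$ and every facet of $P$ takes the form $\{\y : a \cdot \y = (1 + a \cdot t)/g\}$ for a primitive facet normal $a \in \Z^d$ of $gP - t$.

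Next, invoke Construction~\ref{con:goodtriang} to fix a disjoint half-open triangulation $T$ of $\partial P$ into $(d-1)$-simplices of denominator dividing $q$ with exactly one closed simplex, so that $\hstar_{\partial P}(z) = \sum_{\Delta \in T} \hstar_\Delta(z)$. For each $\Delta = \conv(\v1, \dots, \vvd) \in T$ with facets opposite $\v1, \dots, \vvr$ missing, form the pyramid $\Pyr(\x, \Delta)$; the collection $\{\Pyr(\x, \Delta) : \Delta \in T\}$ is a disjoint rational triangulation of~$P$. On each pyramid I would tile the homogenization using the half-open parallelepiped
\[
  \F_{\Pyr(\x, \Delta)} := \sum_{j=1}^r (0,1]\begin{pmatrix}q\vj\\q\end{pmatrix} + \sum_{j=r+1}^d [0,1)\begin{pmatrix}q\vj\\q\end{pmatrix} + [0,1)\begin{pmatrix}g\x\\g\end{pmatrix},
\]
yielding an Ehrhart series with denominator $(1-z^q)^d(1-z^g)$.

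The technical heart, mirroring~\eqref{eq:fundparequal} from the rational reflexive proof, is to show that $\F_{\Pyr(\x, \Delta)} \cap \Z^{d+1} = \F_\Delta \cap \Z^{d+1}$ (identifying the right-hand side with the $\beta = 0$ slice). For a candidate lattice point in $\F_{\Pyr(\x, \Delta)}$ with coefficients $(\alpha_1, \dots, \alpha_d, \beta)$, substituting $q\sum_j \alpha_j = p_{d+1} - g\beta$ into the facet-constrained expansion $a \cdot (p_1, \dots, p_d) = q\sum_j \alpha_j \cdot (1 + a \cdot t)/g + \beta (a \cdot t)$ collapses to
\[
  \beta = \frac{(1 + a \cdot t)\, p_{d+1}}{g} - a \cdot (p_1, \dots, p_d).
\]
Once this value is pinned to $\Z \cap [0, 1) = \{0\}$, each $\hstar_{\Pyr(\x, \Delta)}(z)$ equals $\hstar_\Delta(z)$, and summing over $\Delta \in T$ gives $\Ehr_P(z) = \hstar_{\partial P}(z)/((1-z^q)^d(1-z^g))$; multiplying by $(1-z^q)^{d+1}$ then extracts
\[
  \hstar_P(z) = \frac{1 - z^q}{1 - z^g}\,\hstar_{\partial P}(z) = \frac{1+z+\dots+z^{q-1}}{1+z+\dots+z^{g-1}}\,\hstar_{\partial P}(z).
\]

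The principal obstacle lies in forcing $\beta = 0$: in the reflexive case $(1 + a \cdot t)/g = 1$ makes $\beta$ automatically integer, whereas here $(1 + a \cdot t)/g$ is typically fractional. Closing this gap should require exploiting the full integrality of $(p_1, \dots, p_d) \in \Z^d$ across all $d$ coordinates simultaneously (not just $a \cdot (p_1, \dots, p_d) \in \Z$), together with the defining property of $t$ as the unique interior lattice point of the reflexive polytope $gP - t$.
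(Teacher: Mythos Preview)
Your route differs from the paper's: the paper does not triangulate at all but argues globally, asserting that because the unique interior lattice point $t$ of $gP$ sits at lattice distance~$1$ from every facet of $gP$, a single numerator $f(z)$ serves both
\[
  \Ehr_{\partial P}(z)=\frac{f(z)}{(1-z^g)^{d}}
  \qquad\text{and}\qquad
  \Ehr_P(z)=\frac{f(z)}{(1-z^g)^{d+1}}\,,
\]
after which the identity is a one-line comparison of the denominators $(1-z^q)^d$ and $(1-z^q)^{d+1}$.

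Your gap is real, but for $q=1$ it closes more easily than you fear.  Because $q\vj\in\Z^d$ lies on the facet, the integer $c:=a\cdot(q\vj)$ equals $q(1+a\cdot t)/g$, and your displayed formula becomes
\[
  \beta \ = \ \frac{c\,p_{d+1}}{q}-a\cdot(p_1,\dots,p_d)\ \in\ \tfrac{1}{q}\Z\,.
\]
When $q=1$ this forces $\beta\in\Z\cap[0,1)=\{0\}$ outright---no appeal to ``all $d$ coordinates simultaneously'' is needed---and your proof of the lattice Gorenstein corollary is complete.  For $q>1$, however, the equality $\F_{\Pyr(\x,\Delta)}\cap\Z^{d+1}=\F_\Delta\cap\Z^{d+1}$ you are aiming for is \emph{false}: take $P=[0,\tfrac12]\subset\R$, so $d=1$, $q=2$, $g=4$, $t=1$, $\x=\tfrac14$; with $\Delta=\{\tfrac12\}$ the lattice point $(1,3)$ lies in $\F_{\Pyr(\x,\Delta)}$ with $\alpha_1=\beta=\tfrac12$.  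In fact here $\hstar_P(z)=1+z$ while $\hstar_{\partial P}(z)=1+z+z^2$, and $\frac{1+z}{1+z+z^2+z^3}(1+z+z^2)=\frac{1+z+z^2}{1+z^2}$ is not even a polynomial.  So in the genuinely rational case your approach cannot be repaired, and the same example shows that the paper's global sketch (and indeed the stated identity) breaks down as well; the statement tacitly relies on $\ell=g$, which holds automatically for lattice Gorenstein polytopes but not for rational ones.
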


\begin{proof}[Sketch of Proof] The unique interior lattice point of $gP$ is lattice distance $1$ away from the facet-defining hyperplanes of $gP$, so there is some polynomial $f(z)$ such that
\[
\Ehr_{\partial P}(z)=\frac{f(z)}{(1-z^g)^{d}}
\qquad \text{ and }\qquad
\Ehr_P(z)=\frac{f(z)}{(1-z^g)^{d+1}} \, .
\]
On the other hand, $\hstar_{\partial P}(z)$ and $\hstar_{P}(z)$ are such that
\[
\Ehr_{\partial P}(z)=\frac{\hstar_{\partial P}(z)}{(1-z^q)^{d}}
\qquad \text{ and }\qquad
\Ehr_P(z)=\frac{\hstar_{P}(z)}{(1-z^q)^{d+1}} \, .
\]
Therefore,
\begin{align*}
\hstar_{\partial P}(z)
 &=\frac{(1-z^q)^{d}}{(1-z^g)^{d}} \, f(z)
  =\frac{(1-z^q)^{d}}{(1-z^g)^{d}}\, \frac{(1-z^g)^{d+1}}{(1-z^q)^{d+1}} \, \hstar_{P}(z)\\
 &=\frac{1-z^g}{1-z^q}\, \hstar_P(z)
  =\frac{1+z+\dots+z^{g-1}}{1+z+\dots+z^{q-1}} \, \hstar_P(z) \, . \qedhere
\end{align*}
\end{proof}
Observe that a (lattice) $g$-Gorenstein polytope is a rational $g$-Gorenstein polytope that
happens to be lattice, so letting $q=1$ yields the following generalization of
Corollary~\ref{cor:latrelf}.
\begin{cor}
If $P$ is a (lattice) $g$-Gorenstein polytope, then 
\[
\hstar_{\partial P}(z)=(1+z+\hdots+z^{g-1})\hstar_P(z) \, .
\]
\end{cor}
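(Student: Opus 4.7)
The corollary is an immediate specialization of Theorem~\ref{thm:rationalgorenteinboundary} to the lattice case. My plan is to observe that a lattice $g$-Gorenstein polytope is exactly a rational $g$-Gorenstein polytope whose denominator $q$ equals $1$; here the required divisibility $q \mid g$ is automatic. Setting $q = 1$ in the formula from Theorem~\ref{thm:rationalgorenteinboundary} collapses the numerator $1 + z + \dots + z^{q-1}$ to the constant $1$, yielding
\[
\hstar_P(z) = \frac{1}{1 + z + \dots + z^{g-1}}\, \hstar_{\partial P}(z),
\]
which is equivalent to the claimed identity after multiplying both sides by the geometric series $1 + z + \dots + z^{g-1}$.

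There is no real obstacle, as the substantive work has already been absorbed into the proof of Theorem~\ref{thm:rationalgorenteinboundary}: the reflexivity of $gP$ forces $\Ehr_P(z)$ and $\Ehr_{\partial P}(z)$ to share a common numerator $f(z)$ once written over the denominators $(1-z^g)^{d+1}$ and $(1-z^g)^d$ respectively, and comparing this with the $(1-z^q)$-based denominators from the convention for $\hstar$ produces the displayed ratio of cyclotomic-like factors.

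As a sanity check, the $g = 1$ case recovers Corollary~\ref{cor:latrelf} (the reflexive case), since then $(1 + z + \dots + z^{g-1})$ is just $1$ and the identity reduces to $\hstar_{\partial P}(z) = \hstar_P(z)$. The degrees also line up: $\hstar_{\partial P}(z)$ has degree $d$ by \eqref{eq:hstarpalindromic}, while for a $g$-Gorenstein lattice polytope $\hstar_P(z)$ has degree $d - g + 1$, so the product $(1 + z + \dots + z^{g-1})\hstar_P(z)$ also has degree $(g - 1) + (d - g + 1) = d$.
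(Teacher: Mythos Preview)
Your proposal is correct and follows exactly the paper's own argument: the corollary is obtained by noting that a lattice $g$-Gorenstein polytope is a rational $g$-Gorenstein polytope with denominator $q=1$, and specializing Theorem~\ref{thm:rationalgorenteinboundary} accordingly. The additional sanity checks you provide (the $g=1$ case and the degree count) are correct and consistent with the paper.
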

Due to the palindromicity of $\hstar_{\partial P}(z)$, these relations immediately imply that reflexive, Gorenstein, rational reflexive, and rational Gorenstein polytopes all have palindromic $\hstar$-polynomials.


\section{Rational Ehrhart Theory}\label{sec:ratehrhart}

In the last decade, several works have been devoted to an Ehrhart theory of rational polytopes where we
now allow \emph{rational} (or, equivalently, real) dilation factors. The fundamental structure of the Ehrhart
counting function in the rational parameter $\lambda > 0$ is that
\[
  \rationalehr(P;\lambda) := \left| \lambda P \cap \Z^d \right|
\]
is a quasipolynomial in $\lambda$ (i.e., has the same structure as~\eqref{eq:quasidef}
except that now $\lambda$ is a rational variable); this was first shown by Linke \cite{linke}, who proved several other
results about $\rationalehr(P;\lambda)$, including an analogue of Ehrhart--Macdonald
reciprocity; see also
\cite{baldoniberlinekoeppevergnerealdilation,stapledonweightedehrart,stapledonfreesums,royer}.
Our goal in this section is to prove an analogue of Theorem~\ref{thm:stapledonrat} in this setting.

To this end, we first recall \emph{rational Ehrhart series}, which were introduced only
recently~\cite{sophiemattsophia}. Suppose the full-dimensional rational polytope $P \subset \R^d$ is
given by the irredundant halfspace description
\[ 
  P \ = \ \left\{\x\in\R^d : \, \A\,\x\leq \bb \right\},
\]
where $\A\in\Z^{n\times d}$ and $\bb\in\Z^n$ such that the greatest common divisor of
$b_j$ and the entries in the $j$th  row of $\A$ equals $1$, for every $j \in \{1,\dots,n \}$.
We define the \emph{codenominator} $r$ of $P$ to be the least common multiple of the nonzero entries
of $\bb$.
It turns out that $\rationalehr(P;\lambda)$ is fully determined by
evaluations at rational numbers $\lambda$ with denominator $2r$; if $\mathbf \0 \in P$ then we actually need to know only evaluations at rational numbers $\lambda$ with denominator $r$~\cite[Corollary~5]{sophiemattsophia}.
This motivates a study of the two generating functions
\[
\rationalEhr{P}{z} := 1 + \sum_{n \ge 1} \rationalehr(P;\tfrac n r) \, z^{ \frac n r }
\]
and
\[
\refinedrationalEhr{P}{z} := 1 + \sum_{n \ge 1} \rationalehr(P;\tfrac n {2r}) \, z^{ \frac n {2r} } \, ,
\]
which have the following rational form, as shown in~\cite[Theorem~12]{sophiemattsophia}.

\begin{thm}\label{thm:rationalehr}
Let $P$ be a rational $d$-polytope with codenominator~$r$.
\begin{enumerate}[{\rm (a)}]
\item Let $m\in\Z_{>0}$ be such that $\frac{m}{r}P$ is a lattice polytope. Then
\[
\rationalEhr{P}{z}=\frac{\rationalhstar{P}{z}{m}}{(1-z^{\frac m r})^{d+1}}
\]
where $\rationalhstar{P}{z}{m}$ is a polynomial in $\Z[z^{\frac 1 r}]$ of degree $< m(d+1)$ with nonnegative integral coefficients.
\item[{\rm (b)}] Let $m\in\Z_{>0}$ be such that $\frac{m}{2r}P$ is a lattice polytope. Then
\[
\refinedrationalEhr{P}{z}=\frac{\refinedrationalhstar{P}{z}{m}}{(1-z^{\frac m {2r}})^{d+1}}
\]
where $\refinedrationalhstar{P}{z}{m}$ is a polynomial in $\Z[z^{\frac 1 {2r}}]$ of degree $< m(d+1)$ with nonnegative integral coefficients.
\end{enumerate}
\end{thm}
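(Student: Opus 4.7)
The plan is to reduce this theorem to the classical Ehrhart series formula for rational polytopes via a substitution of variables.

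For part (a), introduce the rational polytope $R := \tfrac{1}{r}\, P \subset \R^d$ and the formal variable $w := z^{1/r}$. The hypothesis that $\tfrac{m}{r}P$ is a lattice polytope says exactly that $mR = \tfrac{m}{r}P$ is lattice, so $R$ is a rational polytope whose denominator divides $m$. For every positive integer $n$, we have
\[
\rationalehr(P; \tfrac{n}{r}) \;=\; \left| \tfrac{n}{r}P \cap \Z^d \right| \;=\; |nR \cap \Z^d| \;=\; \ehr_R(n),
\]
so after the substitution $w = z^{1/r}$,
\[
\rationalEhr{P}{z} \;=\; 1 + \sum_{n \geq 1} \ehr_R(n)\, w^n \;=\; \Ehr_R(w).
\]
Applying the classical Ehrhart series formula to $R$ (whose denominator divides $m$) then gives
\[
\Ehr_R(w) \;=\; \frac{\hstar_R(w)}{(1-w^m)^{d+1}},
\]
where $\hstar_R(w) \in \Z_{\geq 0}[w]$ has degree strictly less than $m(d+1)$. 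Substituting $w = z^{1/r}$ back and setting $\rationalhstar{P}{z}{m} := \hstar_R(z^{1/r})$ yields the desired expression, and the nonnegativity and degree bounds transfer verbatim to $\rationalhstar{P}{z}{m} \in \Z_{\geq 0}[z^{1/r}]$.

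Part (b) follows by the same argument with $r$ replaced by $2r$ throughout: set $R := \tfrac{1}{2r}P$ and $w := z^{1/(2r)}$, observe that the hypothesis now says $mR$ is lattice, and apply the classical formula again to obtain the claimed rational form for $\refinedrationalEhr{P}{z}$.

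The one technical point to verify is the denominator claim for $R$, which is a straightforward vertex calculation: the vertices of $\tfrac{1}{r}P$ are $\tfrac{1}{r}$ times those of $P$, and the hypothesis that $\tfrac{m}{r}P$ is lattice forces $m$ times each such vertex to lie in $\Z^d$. Beyond this bookkeeping there is no real obstacle; the proof is essentially a change of variables that transports the classical Ehrhart structure of $R$ to the rational Ehrhart structure of $P$.
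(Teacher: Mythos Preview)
Your proposal is correct and matches the approach the paper indicates: the paper does not prove Theorem~\ref{thm:rationalehr} in full (it is quoted from \cite{sophiemattsophia}), but it explicitly records the key observation that $\rationalhstar{P}{z}{m}=\hstar_{\frac{1}{r}P}(z^{1/r})$ and $\refinedrationalhstar{P}{z}{m}=\hstar_{\frac{1}{2r}P}(z^{1/(2r)})$, which is precisely your change-of-variables reduction to the classical Ehrhart series of the rational polytope $R=\tfrac{1}{r}P$ (resp.\ $\tfrac{1}{2r}P$).
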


Possibly more important than this theorem are the consequences one can derive from it,
and \cite{sophiemattsophia} (re-)proved several previously-known and novel results in
rational Ehrhart theory. The latter include the facts that $\rationalhstar{P}{z}{m}$ is palindromic if $\0\in
P^\circ$ and that, if $r|m$, extracting the terms with integer exponents from $\rationalhstar{P}{z}{m}$ returns $\hstar_P(z)$, which results in yet another proof of the Betke--McMullen version of Theorem~\ref{thm:stapledonrat} (the $\ell=1$ case).

Theorem~\ref{thm:rationalehr} is based on the observation that
\[
  \rationalhstar{P}{z}{m}=\hstar_{\frac{1}{r}P}(z^{\frac 1 r})
  \qquad \text{ and } \qquad 
  \refinedrationalhstar{P}{z}{m}=\hstar_{\frac{1}{2r}P}(z^{\frac 1 {2r}}); 
\]
note that $m$ is implicitly included in the $\hstar$-polynomial, as we really mean the numerator of the Ehrhart series of $\frac{1}{r}P$ (respectively, $\frac{1}{2r}P$) when the denominator is $(1-z^{m})^{d+1}$. 
The same observation yields the following variant of Theorem~\ref{thm:stapledonrat} for
rational Ehrhart theory.

\begin{cor}
Let $P$ be a rational polytope with codenominator $r$.
\begin{enumerate}[{\rm (1)}]
\item If $\0\in P^\circ$, then $\rationalhstar{P}{z}{m}$ is palindromic.
\item[{\rm (2)}] If $\0\in \partial P$, let $m\in\Z_{>0}$ be such that $\frac{m}{r}P$ is a lattice polytope and let $\ell\geq1$ be the smallest dilate of $\frac{1}{r}P$ that contains an interior point. Then
\[
\frac{1-z^{\frac \ell r}}{1-z^{\frac m r}} \, \rationalhstar{P}{z}{m} = a(z)+z^{\frac
\ell r} \, b(z)
\]
where $a(z) = \rationalhstar{\partial P}{z}{m}$ and $b(z)$ are palindromic polynomials in $\Z[z^{\frac 1 r}]$ with nonnegative integer coefficients.
\item [{\rm (3)}]If $\0\notin P$, let $m\in\Z_{>0}$ be such that $\frac{m}{2r}P$ is a lattice polytope and let $\ell\geq1$ be the smallest dilate of $\frac{1}{2r}P$ that contains an interior point. Then
\[
\frac{1-z^{\frac \ell {2r}}}{1-z^{\frac m {2r}}} \, \refinedrationalhstar{P}{z}{m}=a(z)+z^{\frac \ell {2r}} \, b(z)
\]
where $a(z) = \refinedrationalhstar{\partial P}{z}{m}$ and $b(z)$ are palindromic polynomials in $\Z[z^{\frac 1 {2r}}]$ with nonnegative integer coefficients.
\end{enumerate}
\end{cor}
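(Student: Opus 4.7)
The plan is to reduce each of the three parts to results already proved, via the rescalings $\tfrac{1}{r}P$ and $\tfrac{1}{2r}P$. The key identifications, underlying Theorem~\ref{thm:rationalehr}, are
\[
  \rationalhstar{P}{z}{m}=\hstar_{\frac{1}{r}P}\bigl(z^{1/r}\bigr),
  \qquad
  \refinedrationalhstar{P}{z}{m}=\hstar_{\frac{1}{2r}P}\bigl(z^{1/(2r)}\bigr),
\]
with analogues for the boundary that follow from $\ehr_{\partial \frac{1}{r}P}(n)=\rationalehr(\partial P;n/r)$. Once these are in place, each case becomes a direct application of a result from Sections~\ref{sec:boundary}--\ref{sec:gorenstein} to the rescaled polytope, pulled back via $w\mapsto z^{1/r}$ or $w\mapsto z^{1/(2r)}$.

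For part~(1) I would first show that $\tfrac{1}{r}P$ is rational reflexive. Write $P=\{\x:\A\x\le\bb\}$ in the primitive halfspace form used to define the codenominator. Because $\0\in P^\circ$, every $b_j$ is positive, and by the definition of $r$ as the lcm of the nonzero $b_j$ we have $b_j\mid r$ for all $j$. Rescaling the $j$th inequality by the integer $r/b_j$ presents $\tfrac{1}{r}P$ as $\{\x:\A'\x\le\1\}$ with $\A'$ integral, which is the definition of rational reflexivity. Theorem~\ref{thm:ratrefl} then gives that $\hstar_{\frac{1}{r}P}(w)$ is palindromic, and substituting $w=z^{1/r}$ yields the palindromicity of $\rationalhstar{P}{z}{m}$.

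For part~(2), the $m$ in the statement is the denominator of the rational polytope $\tfrac{1}{r}P$, and $\ell$ is the smallest positive integer such that $\ell\cdot\tfrac{1}{r}P$ contains an interior lattice point. Applying Theorem~\ref{thm:stapledonrat} to $\tfrac{1}{r}P$ produces
\[
  \frac{1-w^\ell}{1-w^m}\,\hstar_{\frac{1}{r}P}(w)=\tilde a(w)+w^\ell\,\tilde b(w)
\]
with $\tilde a,\tilde b$ palindromic and nonnegative; as noted in Section~\ref{sec:boundary}, uniqueness of the symmetric decomposition together with the palindromicity~\eqref{eq:hstarpalindromic} of $\hstar_{\partial \frac{1}{r}P}(w)$ forces $\tilde a(w)=\hstar_{\partial \frac{1}{r}P}(w)$. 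Substituting $w=z^{1/r}$ then yields the claimed decomposition of $\tfrac{1-z^{\ell/r}}{1-z^{m/r}}\rationalhstar{P}{z}{m}$ with $a(z)=\rationalhstar{\partial P}{z}{m}$ and $b(z)=\tilde b(z^{1/r})$. Part~(3) is identical after replacing $\tfrac{1}{r}P$ by $\tfrac{1}{2r}P$ and $w=z^{1/r}$ by $w=z^{1/(2r)}$; the hypothesis $\0\notin P$ is what forces us to work with the finer denominator $2r$, by \cite[Corollary~5]{sophiemattsophia}.

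The hard part is not conceptual but bookkeeping: one must track the fractional exponents carefully under $w\mapsto z^{1/r}$ (respectively $w\mapsto z^{1/(2r)}$) and verify that what plays the role of the ``denominator'' in Theorem~\ref{thm:stapledonrat} applied to $\tfrac{1}{r}P$ is our $m$, not $r$. Once that is checked, palindromicity and coefficient-nonnegativity transport unchanged through the substitution, and the three cases fall out at once.
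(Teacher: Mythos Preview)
Your proposal is correct and matches the paper's own (implicit) argument: the corollary is presented there as an immediate consequence of the identifications $\rationalhstar{P}{z}{m}=\hstar_{\frac{1}{r}P}(z^{1/r})$ and $\refinedrationalhstar{P}{z}{m}=\hstar_{\frac{1}{2r}P}(z^{1/(2r)})$, with Theorem~\ref{thm:stapledonrat} applied to the rescaled polytope, exactly as you do for (2) and~(3). One small slip: $m$ need not be \emph{the} denominator of $\tfrac{1}{r}P$, only a multiple of it, but Theorem~\ref{thm:stapledonrat} (and its proof) extend verbatim to any multiple of the denominator, so this is harmless; for (1), your route through Theorem~\ref{thm:ratrefl} is a nice alternative to citing~\cite{sophiemattsophia} directly.
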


\begin{exmp}
Let $P=\conv\{(0,0),(0,2),(5,2)\}$, alternatively the intersection of the halfspaces
\[
\{x_1\geq 0\}\cap\{x_2\leq2\}\cap\{5x_2-2x_1\geq0\}.
\]
From this, we can see that $P$ has codenominator $r=2$. Since $\0\in \partial P$, we look at the rational dilate 
\[
\tfrac{1}{2}P=\conv\{(0,0),(0,1),(\tfrac 5 2,1)\}.
\]
The first dilate of $\frac{1}{2}P$ containing an interior lattice point is the $\ell=2$nd dilate, which contains the points $(1,1)$ and $(2,1)$. We choose $m=r=2$ minimal, and we compute the associated $\hstar$-polynomial of $\frac{1}{2}P$:
\[
\hstar_{\frac{1}{2}P}(z)=(1-z^{2})^3 \, \Ehr_{\frac{1}{2}P}(z) =1+4z+7z^2+6z^3+2z^4\]
and so
\begin{align*}
\rationalhstar{P}{z}{2}&=1+4z^{\frac 1 2}+7z+6z^{\frac 3 2}+2z^2 \\
&=(1+4z^{\frac 1 2}+6z+4z^{\frac 3 2}+z^2)+z(1+2z^{\frac 1 2}+z) \, .
\end{align*}
We can check that the first polynomial in the decomposition is equal to
\begin{align*}
\rationalhstar{P}{z}{2}&=\hstar(\partial  \tfrac{1}{2}P;
z^{\frac 1 2})=\frac{h^\ast(\frac{1}{2}P;z^{\frac 1 2})-\hstar(\frac{1}{2}P^\circ;z^{\frac 1
2})}{1-(z^{\frac 1 2})^{2}}\\
&=1+4z^{\frac 1 2}+6z+4z^{\frac 3 2}+z^2.
\end{align*}
Moreover, the power in front of the second polynomial is $z=z^{\frac 2 2}=z^{\frac \ell r}$ and both polynomials are palindromic.\footnote{
We thank Sophie Rehberg for suggesting this example and helping with computing it.
}
\end{exmp}


\bibliographystyle{plain}
\bibliography{bib2}

\end{document}